\newtheorem{thm}{Theorem}
\newtheorem{lem}{Lemma}
\newdefinition{example}{Experiment}
\newdefinition{rmk}{Remark}
\begin{document}

\begin{frontmatter}


\title{Computing the Lyapunov operator $\varphi$-functions, with an application to matrix-valued exponential integrators}
\author[author1,author2]{Dongping Li }
\ead{lidp@ccsfu.edu.cn}
\author[author2]{Yue Zhang}
\author[author3]{Xiuying Zhang \corref{cor1}}
\ead{xyzhang009@gmail.com}
\cortext[cor1]{Corresponding author.}
\address[author1]{School of Mathematics, Jilin University, Changchun 130012, PR China}
\address[author2]{ School of Mathematics, Changchun Normal University, Changchun 130032, PR China}
\address[author3]{International Education Teachers School, Changchun Normal University, Changchun 130032, PR China}

\begin{abstract}

In this paper, we develop efficient and accurate evaluation for the Lyapunov operator function
$\varphi_l(\mathcal{L}_A)[Q],$ where $\varphi_l(\cdot)$ is the function related to the exponential,
$\mathcal{L}_A$ is a Lyapunov operator and $Q$ is a symmetric and full-rank matrix.
An important application of the algorithm is to the matrix-valued exponential
integrators for matrix differential equations such as differential Lyapunov equations
and differential Riccati equations.  The method is exploited by using the modified scaling and squaring
procedure combined with the truncated Taylor series. A quasi-backward error analysis is presented to
determine the value of the scaling parameter and the degree of the Taylor approximation.
Numerical experiments show that the algorithm performs well in both accuracy and efficiency.
\end{abstract}

\begin{keyword}
Modified scaling and squaring method, Matrix-valued exponential integrators, $\varphi$-functions, Lyapunov operator

\MSC[2010] 65F30 \sep 65F10 \sep 65L05 \sep 15A60
\end{keyword}
\end{frontmatter}
\section{Introduction}
In this work we are concerned with numerical method for approximating the so-called Lyapunov operator $\varphi$-functions of the form
\begin{equation}\label{1.1}
\varphi_l(\mathcal{L}_A)[Q], ~~l\in \mathbb{N}.
\end{equation}
where $Q\in \mathbb{R}^{N\times N}$ is symmetric and full rank, and
$\mathcal{L}_A:\mathbb{R}^{N\times N}\rightarrow \mathbb{R}^{N\times N}$ is the \emph{Lyapunov~operator}
\begin{equation}\label{1.2}
\begin{aligned}
\mathcal{L}_A[X] = AX+XA^T, ~~ A\in \mathbb{R}^{N\times N}.
\end{aligned}
\end{equation}
These $\varphi$-functions are defined for integers $l\geq 0$  by the functional integral
\begin{equation}\label{1.3}
\begin{aligned}
\varphi_{0}(\mathcal{L}_A)=e^{\mathcal{L}_A},~~\varphi_{l}(\mathcal{L}_A)=\frac{1}{(l-1)!}\int_{0}^{1}e^{(1-\theta)\mathcal{L}_A}\theta^{l-1}d\theta, ~l\ge 1.
\end{aligned}
\end{equation}

Let ${\mathcal{L}_A}^k$ denote the $k$th power of the Lyapunov operator $\mathcal{L}_A$, defined as $k$-fold composition,
i.e., ${\mathcal{L}_A}^k\equiv \mathcal{L}_A[{\mathcal{L}_A}^{k-1}]$ for $k>0$, and ${\mathcal{L}_A}^0=I_N.$
The Lyapunov operator $\varphi$-functions (\ref{1.1}) can then be represented as the Taylor series expansion
\begin{equation}\label{1.3a}
\varphi_l(\mathcal{L}_A)=\sum\limits^{\infty}_{k=0}\frac{1}{(k+l)!}{\mathcal{L}_A}^k, ~~l\in \mathbb{N},
\end{equation}
which satisfy the recursive relation
\begin{equation}\label{1.3b}
\varphi_{k-1}(\mathcal{L}_A)=\mathcal{L}_A\varphi_k(\mathcal{L}_A)+\frac{1}{(k-1)!}I_N,~~k=l, l-1,\ldots,1.
\end{equation}
Furthermore, we have
\begin{equation}\label{1.3c}
\varphi_l(\mathcal{L}_A)=\mathcal{L}_A^{-l} \left(e^{\mathcal{L}_A}-\sum\limits^{l-1}_{j=0}\mathcal{L}_A^{j}/j!\right).
\end{equation}

Such problems play a key role in a class of numerical methods called matrix-valued exponential integrators for solving matrix differential equations (MDEs) of the form
\begin{equation}\label{1.4}
\left\{
\begin{array}{l}
X'(t)=AX(t)+X(t)A^T+N(t,X(t)),~~\\
X(t_0)=X_0,
\end{array}
\right.
\end{equation}
where $N:\mathbb{R}\times \mathbb{R}^{N\times N}\rightarrow \mathbb{R}^{N\times N}$ is the nonlinear term, and $X(t)\in \mathbb{R}^{N\times N}$.
MDEs are of major importance in many fields such as optimal control, model reduction of linear dynamical systems,
semi-discretization of a partial differential equation and many others (see e.g., \cite{Abou,Antoulas,Ascher,Jacobs}).
Many important equations such as differential Lyapunov equations (DLEs) and differential Riccati equations (DREs) can be put in the form.
In the literature, there has been an enormous approaches to compute the solution of MDEs (\ref{1.4}),
see, e.g., \cite{Behr,Benner01,Choi,Dieci,Koskela,Hached,Mena,Simoncini20,Stillfjord2}.

Exponential integrators constitute an interesting class of numerical methods for the time integration of stiff systems of differential equations.
The methods are very competitive for semi-linear stiff problems as they can treat the linear term exactly and the nonlinearity in an explicit way.
For the standard (vector-valued) exponential integrators, we refer to \cite{BV2005, Hochbruck2010} for a full review.
Although MDEs (\ref{1.4}) can be reformulated as a standard (vector-valued) ordinary differential equation and solved by
a standard exponential integrator, this approach will be usually memory consuming as well as computationally
expensive. Recently, in \cite{Li2021}, the matrix-valued exponential Rosenbrock-type methods are proposed for solving DREs.

The important ingredient to implementation of vector-valued exponential integrators is the computation of
the matrix $\varphi$-functions. But for the matrix-valued exponential integrators,
a few operator $\varphi$-functions are required to compute at each time step.
For matrix $\varphi$-functions many numerical methods have been studied, see e.g.,
\cite{AlMohy2011,Tokman18,Li2022,Niesen2012,Sidje1998, Skaflestad,Suhov}.
For the operator $\varphi$-functions and to our knowledge there is no existing method in the literature.

The scaling and squaring method is the most popular
method for computing the matrix exponential \cite{Moler2003}. In \cite{Skaflestad},
a modified scaling and squaring method based on Pad\'e approximation is described for the computation of matrix $\varphi$-functions.
A very recent paper \cite{Li2022} shows the modified scaling and squaring procedure combined with truncated Taylor series
could be more efficient. The aim of the present paper is to generalize the techniques as those used in \cite{Li2022}
to accurately and efficiently evaluate the operator $\varphi$-functions of the form (\ref{1.1}).
We present a quasi-backward error analysis to help choosing the key parameters of the method.
Numerical experiments illustrate that the method can be used as a kernel of matrix-valued exponential integrators.

The paper is organized as follows. In Section 2, the matrix-valued exponential integrators
 are introduced for the application to MDEs.
 In Section 3, we introduce the modified scaling and squaring method for
evaluating the operator $\varphi$-functions. The implementation details of the method are presented in Section 4.
Numerical experiments are given to illustrate the benefits of the algorithm in Section 5. Finally, conclusions are given in Section 6.

Throughout the paper, we use the following notations.

$\bullet$ $I_n$ is the $n\times n$ identity matrix, and $0_n$ is the $n\times n$ zero matrix.

$\bullet$ For a matrix $A=[a_{ij}]\in \mathbb{R}^{n\times n},$ its column-wise vector is denoted by
 \begin{equation*}
\text {vec}(A)=[a_{11}, a_{21},\ldots, a_{n1}, a_{12}, a_{22}, \ldots, a_{n2}, a_{1n}, a_{2n}, \ldots, a_{nn}]^T.
\end{equation*}

$\bullet$ $\|\cdot\|$ denotes any consistent matrix norm. In particular $\|\cdot\|_1$ denotes
the 1-norm of matrices and $\|\cdot\|_F$ denotes the Frobenius norm of matrices.

$\bullet$ $\sigma(\cdot)$ and $\rho(\cdot)$ denote the spectral set and the spectral radius of matrix or operator, respectively.

$\bullet$ $A\otimes B=[a_{ij}B]\in \mathbb{R}^{mn\times mn}$ and $A\oplus B =A\otimes I_n+I_m\otimes B$, respectively,
denote the Kronector product and the Kronector sum
of matrices $A=[a_{ij}]\in \mathbb{R}^{m\times m}$ and $B\in \mathbb{R}^{n\times n}.$

$\bullet$ $\text{Lyap}(n)$ denotes the set of Lyapunov operator $\mathcal{L}_A[X]=AX+XA^T$ for any $A\in \mathbb{R}^{n\times n}.$

$\bullet$ $\lfloor x\rfloor$ denotes the largest integer not exceeding $x$ and $\lceil x\rceil$ denotes the smallest integer not less than $x$.

In this paper, the norm of Lyapunov operator $\mathcal{L}_A\in$\text{Lyap}($n$) is the induced norm, which is defined by

\begin{equation}\label{1.5}
\|\mathcal{L}_A\|:=\max\limits_{X\in \mathbb{R}^{n\times n}}\frac{\|\mathcal{L}_A[X]\|}{\|X\|}.
\end{equation}
From (\ref{1.5}) it is easy to show that
\begin{equation}\label{1.6}
\rho(\mathcal{L}_A)\leq\|\mathcal{L}_A\|\leq 2\|A\|.
\end{equation}

\section{Matrix-valued exponential integrators}\label{sec:2}
The matrix-valued exponential integrators for (\ref{1.4}) can be derived from the approximation of the integral that results from the application of the variation-of-constants
formula. By means of the variation-of-constants formula (see e.g., \cite{Kucera}), the exact solution of (\ref{1.4}) at time $t_{n+1}=t_n+h_n$
satisfies the nonlinear integral equation
\begin{equation}\label{2.1}
\begin{aligned}
X(t_n+h)=e^{hA}X(t_n)e^{hA^T}+h\int_{0}^{1}e^{(1-\tau)hA}N(t_n+\tau h,X(t_n+\tau h))e^{(1-\tau)hA^T}\text{d}\tau.
\end{aligned}
\end{equation}
where $h$ is the time step. The following lemma provides a more compact form for the solution formula (\ref{2.1}).
\begin{lem} [\cite{Behr}]
For the Lyapunov operator $\mathcal{L}_A:\mathbb{R}^{N\times N}\rightarrow \mathbb{R}^{N\times N}$ and its partial realizations $\mathcal{H}, \mathcal{H}^T: \mathbb{R}^{N\times N}\rightarrow \mathbb{R}^{N\times N},$ $\mathcal{H}[X]=AX, \mathcal{H}^T=XA^T,$ it holds that:
\begin{equation}\label{2.1a}
e^{h\mathcal{L}_A}[X]=e^{h\mathcal{H}}e^{h\mathcal{H}^T}[X]=e^{hA}Xe^{hA^T}.
\end{equation}
\end{lem}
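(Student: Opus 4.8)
The plan is to decompose the Lyapunov operator into two commuting pieces and then factor the operator exponential accordingly. Writing $\mathcal{L}_A[X]=AX+XA^T=\mathcal{H}[X]+\mathcal{H}^T[X]$, I would first observe that the two partial realizations $\mathcal{H}$ and $\mathcal{H}^T$ commute as linear operators on $\mathbb{R}^{N\times N}$. Indeed, by associativity of matrix multiplication,
\begin{equation*}
\mathcal{H}\mathcal{H}^T[X]=A(XA^T)=(AX)A^T=\mathcal{H}^T\mathcal{H}[X],
\end{equation*}
so that $\mathcal{H}\mathcal{H}^T=\mathcal{H}^T\mathcal{H}$ as operators.

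Since $\mathcal{H}$ and $\mathcal{H}^T$ are bounded linear operators on the finite-dimensional space $\mathbb{R}^{N\times N}$, their operator exponentials are defined by absolutely convergent power series, and the standard identity $e^{h(\mathcal{H}+\mathcal{H}^T)}=e^{h\mathcal{H}}e^{h\mathcal{H}^T}$ holds precisely because the two operators commute. This step supplies the first equality in (\ref{2.1a}), namely $e^{h\mathcal{L}_A}[X]=e^{h\mathcal{H}}e^{h\mathcal{H}^T}[X]$.

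Next I would evaluate each partial exponential explicitly through its series. Iterating the definitions gives $\mathcal{H}^k[X]=A^kX$ and $(\mathcal{H}^T)^k[X]=X(A^T)^k$ for every $k\geq 0$, whence
\begin{equation*}
e^{h\mathcal{H}}[X]=\sum_{k=0}^{\infty}\frac{h^k}{k!}\mathcal{H}^k[X]=\Big(\sum_{k=0}^{\infty}\frac{h^kA^k}{k!}\Big)X=e^{hA}X,
\end{equation*}
and, in the same way, $e^{h\mathcal{H}^T}[X]=Xe^{hA^T}$. Composing the two actions, $e^{h\mathcal{H}}e^{h\mathcal{H}^T}[X]=e^{h\mathcal{H}}\big[Xe^{hA^T}\big]=e^{hA}Xe^{hA^T}$, which closes the chain of equalities.

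I do not anticipate a serious obstacle here; the only point demanding care is the justification of the factorization $e^{h(\mathcal{H}+\mathcal{H}^T)}=e^{h\mathcal{H}}e^{h\mathcal{H}^T}$, which rests entirely on the commutativity established in the first step. Everything else amounts to a routine rearrangement of convergent power series on a finite-dimensional operator space, so I expect the verification of commutativity — together with an explicit note that the series manipulation is legitimate by absolute convergence — to be the load-bearing part of the argument.
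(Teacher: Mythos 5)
Your proof is correct. Note that the paper itself offers no proof of this lemma at all: it is quoted verbatim from the reference by Behr, Benner and Heiland, so there is no in-paper argument to compare against. Your argument is the standard (and essentially the cited) one: the two partial realizations $\mathcal{H}$ and $\mathcal{H}^T$ commute by associativity of matrix multiplication, which licenses the factorization $e^{h(\mathcal{H}+\mathcal{H}^T)}=e^{h\mathcal{H}}e^{h\mathcal{H}^T}$, and each factor is then evaluated term by term via $\mathcal{H}^k[X]=A^kX$ and $(\mathcal{H}^T)^k[X]=X(A^T)^k$, all series manipulations being legitimate since these are bounded operators on the finite-dimensional space $\mathbb{R}^{N\times N}$. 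You correctly identified the commutativity step as the load-bearing one; the rest is routine, and your chain of equalities closes the claim completely.
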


Thus, the solution formula (\ref{2.1}) can be rewritten as
\begin{equation}\label{2.2}
\begin{aligned}
X(t_n+h)=e^{h\mathcal{L}_A}[X({t_n)}]+h\int_{0}^{1}e^{(1-\tau)h\mathcal{L}_A}[N(t_n+\tau h,X(t_n+\tau h))]\text{d}\tau.
\end{aligned}
\end{equation}
By approximating the nonlinear terms
$N\big(t_n+sh, X(t_n+s)\big)$ in (\ref{2.2}) by an appropriate interpolating polynomial, we can exploit various types of
exponential integrators like Runge-Kutta and multi-step methods. For example, by interpolating the
nonlinearity at the known value $N\big(X(t_n)\big)$ only, we obtain the well-known exponential Euler scheme:
\begin{eqnarray}\label{2.3}
X_{n+1}=e^{h\mathcal{L}_A}[X_n]+h\varphi_{1}(h\mathcal{L}_A)\left[N(t_n,X_n)\right].
\end{eqnarray}
The scheme (\ref{2.3}) is first order and accurate for MDEs (\ref{1.4}) with $N(t,X)$ being a constant matrix.
The application of the standard exponential Runge-Kutta type methods \cite{MH2}, to the matrix-valued initial value
problem (\ref{1.4}), yields
\begin{equation}\label{2.4}
\left\{
\begin{array}{l}
X_{ni}=e^{c_ih_n\mathcal{L}_A}[X_n]+h_n\sum\limits^{i-1}_{j=1}a_{ij}(h_n\mathcal{L}_A)[N(t_n+c_j, X_{nj})],~~1\leq i\leq s,\\
X_{n+1}=e^{h_n\mathcal{L}_A}[X_n]+h_n\sum\limits^{s}_{i=1}b_{i}(h_n\mathcal{L}_A)[N(t_n+c_i, X_{ni})].
\end{array}
\right.
\end{equation}
Here, $c_i$ is the nodes, and the coefficients $a_{ij}(z),$ $b_i(z)$ are linear combinations of $\varphi_j(c_iz),$ $\varphi_j(z),$ respectively.
Details on the values of these coefficients and convergence analysis are exactly similar with the standard exponential integrators and can be found in \cite{MH2, Luan2013}.

In particular, for the DLEs
\begin{equation}\label{2.5}
\left\{
\begin{array}{l}
X'(t)=AX(t)+X(t)A^T+\frac{t^{l-1}}{(l-1)!}Q,~~l\in \mathbb{N},\\
X(0)=0_{N},
\end{array}
\right.
\end{equation}
the exact solutions at time $t$ are given as $X(t)=t^l\varphi_l(t\mathcal{L}_A)[Q]$.
By reformulating the LDEs (\ref{2.5}) as the vector-valued ordinary differential equations, one can easily show
that
\begin{equation}\label{2.6}
\text{vec}\left(\varphi_l(t\mathcal{L}_A)[Q]\right)=\varphi_l(tL_A)b,
\end{equation}
where $L_A=A\oplus A \in \mathbb{R}^{N^2\times N^2}$ and $b=\text{vec}(Q)\in \mathbb{R}^{N^2}.$
Thus it is then possible to directly apply a method tailored for the matrix $\varphi$-functions to evaluate these operator $\varphi$-functions.
However, this approach would lead to large memory and computational requirements.

As can be observed, the Lyapunov operator $\varphi$-functions appear naturally in the matrix-valued exponential
integrators. The efficient and accurate evaluation of these functions is crucial for stability and speed of exponential integrators.

\section{The method}\label{sec:3}
This section we briefly introduce the modified scaling and squaring for evaluating the operator $\varphi$-functions.
The following lemma gives a formula for the operator $\varphi$-functions. The formula for scalar arguments has been considered already in \cite{Skaflestad} without proof.
\begin{lem} Given a Lyapunov operator $\mathcal{L}_A:\mathbb{R}^{N\times N}\rightarrow \mathbb{R}^{N\times N}$ and an integer $l\geq0,$ then for any $a, b \in \mathbb{R},$ we have
\begin{equation}\label{2.7}
\varphi_l\left((a+b)\mathcal{L}_A\right)=\frac{1}{(a+b)^l}\left(a^l\varphi_0(b\mathcal{L}_A)\varphi_l(a\mathcal{L}_A)+\sum\limits^{l}_{k=1}\frac{a^{l-k}b^{k}}{(l-k)!}\varphi_{k}(b\mathcal{L}_A)\right).
\end{equation}
\end{lem}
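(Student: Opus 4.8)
The plan is to reduce the operator identity (\ref{2.7}) to a scalar identity and then verify the latter by means of the closed form (\ref{1.3c}). Every function appearing in (\ref{2.7}) is defined through the same convergent power series (\ref{1.3a}) in the single operator $\mathcal{L}_A$, so each of $\varphi_0(b\mathcal{L}_A)$, $\varphi_l(a\mathcal{L}_A)$ and $\varphi_k(b\mathcal{L}_A)$ is itself a power series in $\mathcal{L}_A$; in particular these operators mutually commute and the products are unambiguous. Since the assignment $f\mapsto f(\mathcal{L}_A)$, obtained by substituting $\mathcal{L}_A$ into the Taylor series of an entire function $f$, respects sums, scalar multiples and products (Cauchy product, using that $\mathcal{L}_A$ commutes with itself), it suffices to prove the scalar counterpart
\[
\varphi_l((a+b)z)=\frac{1}{(a+b)^l}\Big(a^l\varphi_0(bz)\varphi_l(az)+\sum_{k=1}^l\frac{a^{l-k}b^k}{(l-k)!}\,\varphi_k(bz)\Big),
\]
valid for all $z\in\mathbb{R}$ with $a+b\neq0$, and then set $z=\mathcal{L}_A$. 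The operator equality (\ref{2.7}) is then the image of this scalar identity under the homomorphism.

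To establish the scalar identity I would set $x=az$ and $y=bz$, so that $x+y=(a+b)z$, and substitute the closed form $\varphi_m(w)=w^{-m}\big(e^{w}-\sum_{j=0}^{m-1}w^{j}/j!\big)$ coming from (\ref{1.3c}) into every $\varphi$ on both sides. After multiplying through by $(a+b)^l$ and extracting the common factor $z^{-l}$, the genuine exponential contributions $e^{x+y}=e^{(a+b)z}$ match on the two sides at once, because $a^l\varphi_0(bz)\varphi_l(az)$ contributes $z^{-l}e^{bz}e^{az}$ after the factors $a^l$ and $x^{-l}=a^{-l}z^{-l}$ combine. Thus the whole problem is reduced to checking that the remaining (rational-in-$z$) corrections coincide.

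The first simplification is that the terms carrying a factor $e^{bz}$ cancel: the exponential $e^{y}$ produced by $a^l\varphi_0(bz)\varphi_l(az)$ multiplies $-\sum_{j=0}^{l-1}\frac{a^{j}}{j!}z^{j-l}$, and the reindexing $k=l-j$ turns this into exactly the negative of $\sum_{k=1}^l\frac{a^{l-k}}{(l-k)!}z^{-k}$, which is precisely the $e^{bz}$-part coming from the exponentials in the $\varphi_k(bz)$ of the sum. What survives is the purely algebraic identity
\[
\sum_{k=1}^l\sum_{j=0}^{k-1}\frac{a^{l-k}b^{j}}{(l-k)!\,j!}\,z^{\,j-k}=\sum_{m=0}^{l-1}\frac{(a+b)^{m}}{m!}\,z^{\,m-l}.
\]
Matching the coefficient of a fixed power $z^{-p}$ (equivalently imposing $k-j=p$ on the left and $m=l-p$ on the right) rewrites the left-hand coefficient as $\frac{1}{(l-p)!}\sum_{j=0}^{l-p}\binom{l-p}{j}a^{\,l-p-j}b^{\,j}$, and the binomial theorem collapses it to $(a+b)^{l-p}/(l-p)!$, which is exactly the right-hand coefficient.

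The conceptual moves — the reduction via the functional calculus and the substitution of (\ref{1.3c}) — are routine, so the main obstacle is bookkeeping: keeping the two families of power corrections separate, performing the reindexing $k=l-j$ so that the $e^{bz}$ terms cancel without residue, and organizing the surviving double sum so that fixing the exponent of $z$ exposes the binomial theorem. I would treat the degenerate cases briefly: the stated formula needs $a+b\neq0$, while $a=0$ (only the $k=l$ summand survives) and $b=0$ are directly seen to be consistent. As an alternative that bypasses (\ref{1.3c}), one could compare the coefficient of $\mathcal{L}_A^{n}$ on each side of (\ref{2.7}) directly from the Taylor expansion (\ref{1.3a}) via a Cauchy product, but this route leads to the same binomial identity and offers no real saving.
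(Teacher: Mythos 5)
Your proof is correct, but it takes a genuinely different route from the paper's. The paper argues dynamically: it interprets $t^l\varphi_l(t\mathcal{L}_A)[Q]$ as the exact solution of the differential Lyapunov equation (\ref{2.5}), evaluates that solution at time $a+b$ in two ways --- in one step, and by splitting $[0,a+b]$ into $[0,a]$ and $[a,a+b]$ and applying the variation-of-constants formula (\ref{2.2}) with initial value $X(a)=a^l\varphi_l(a\mathcal{L}_A)[Q]$ --- and then matches the two expressions; the sum $\sum_{k=1}^{l}\frac{a^{l-k}b^k}{(l-k)!}\varphi_k(b\mathcal{L}_A)$ emerges from a binomial expansion of $(a+\tau b)^{l-1}$ under the integral. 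You instead reduce the operator identity to a scalar one via the power-series functional calculus (legitimate, since all operators involved are power series in the single operator $\mathcal{L}_A$ and hence commute), verify the scalar identity from the closed form (\ref{1.3c}) by cancelling the $e^{bz}$ terms and invoking the binomial theorem, and transfer back by matching Taylor coefficients. Your computations check out: the reindexing $k=l-j$ does cancel the $e^{bz}$ contributions exactly, and the surviving double sum collapses as you claim. A notable merit of your route is that it uses (\ref{1.3c}) only at the scalar level for $z\neq 0$, so the non-invertibility of $\mathcal{L}_A$ never intervenes, and it makes the implicit hypothesis $a+b\neq 0$ (and the degenerate cases $a=0$, $b=0$) explicit, which the paper glosses over. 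What the paper's argument buys in exchange is brevity given the machinery already set up in Section 2, and conceptual transparency: the identity is exhibited as a flow (semigroup) property of the DLE solution, which is precisely the structural fact that the modified scaling-and-squaring recursion (\ref{3.11}) exploits.
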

\begin{proof} For any $Q\in \mathbb{R}^{N\times N},$ it is clear that the solution of DLEs (\ref{2.5}) at time $a+b$ is
\begin{equation}\label{2.8}
(a+b)^l\varphi_l\left((a+b)\mathcal{L}_A\right)[Q].
\end{equation}
On the other hand, by splitting the time interval $[0, a+b]$ into two subintervals $[0, a]$ and $[a, a+b],$ we can re-express the solution of DLEs (\ref{2.5}) at time $a+b$ by using a time-stepping method. At time $a,$ the solution is $X(a)=a^l\varphi_l\left(a\mathcal{L}_A\right)[Q].$
To advance the solution, using $X(a)$ as initial value and applying the formula (\ref{2.2}), we arrive at
\begin{eqnarray}\label{2.9}
\begin{aligned}
X(a+b)&=e^{b\mathcal{L}_A}[X(a)]+b\int_{0}^{1}e^{(1-\tau)b\mathcal{L}_A}\left[\frac{(a+\tau b)^{l-1}}{(l-1)!}Q\right]\text{d}\tau\\
&=a^le^{b\mathcal{L}_A}\varphi_l(a\mathcal{L}_A)[Q]+b\int_{0}^{1}\sum\limits^{l-1}_{k=0}\frac{a^{l-k-1}(b\tau)^k}{(l-1-k)!k!}e^{(1-\tau)b\mathcal{L}_A}[Q]\text{d}\tau\\
&=a^le^{b\mathcal{L}_A}\varphi_l(a\mathcal{L}_A)[Q]+\sum\limits^{l}_{k=1}\frac{a^{l-k}b^k}{(l-k)!}\varphi_k(b\mathcal{L}_A)[Q].
\end{aligned}
\end{eqnarray}
By equalizing the expression (\ref{2.8}) with (\ref{2.9}), we have the claim directly.
\end{proof}

On taking $a=b=1,$ we obtain
\begin{equation}\label{2.10}
\varphi_l(2\mathcal{L}_A)[Q]=\frac{1}{2^l} \left(\varphi_0(\mathcal{L}_A)\varphi_l(\mathcal{L}_A)[Q]+ \sum\limits^{l}_{j=1}\frac{1}{(l-j)!}\varphi_j(\mathcal{L}_A)[Q]\right).
\end{equation}
The formula is the starting point for the method for the evaluation of (\ref{1.1}) which we develop in the next subsection.

\subsection{Derivation of the method}\label{sec:3.1}
 The main idea behind this method is to scale the Lyapunov operator $\mathcal{L}_{A}$ by a factor $2^{-s}$,
 so that $\|2^{-s}\mathcal{L}_A\|$ is sufficiently small and $\varphi_j(2^{-s}\mathcal{L}_A),$ $j=0,1,\ldots,l,$
 can be well approximated by their truncated Taylor series.
 For simplicity of exposition, in the following we will use $\mathcal{L}$ instead of $2^{-s}\mathcal{L}_{A}$. Then, we can compute $\varphi_l(\mathcal{L}_A)[Q]$ via the following coupled recursions
\begin{eqnarray}\label{3.11}
\varphi_i(2^{k}\mathcal{L})[Q]=\frac{1}{2^i} \left(\varphi_0(2^{k-1}\mathcal{L}) \varphi_i(2^{k-1}\mathcal{L})[Q]+ \sum\limits^{i}_{j=1}\frac{1}{(i-j)!}\varphi_j(2^{k-1}\mathcal{L})[Q]\right), ~1\leq i\leq l,~k=0,1,\dots,s.
\end{eqnarray}
This process need to pre-evaluate $\varphi_j(\mathcal{L})[Q]$ for $j=1,\dots,l.$ Let
\begin{eqnarray}\label{3.12}
T_{l,m}(\mathcal{L})[Q]=\sum\limits^{\infty}_{k=0}\frac{1}{(k+l)!}{\mathcal{L}}^k[Q]
\end{eqnarray}
be the order of $m$ truncated Taylor approximation to $\varphi_l(\mathcal{L}).$ The operator polynomial $T_{l,m}(\mathcal{L})[Q]$ can be computed by using the
Honer's method, which requires $m$ matrix-matrix products. Once $T_{l,m}(\mathcal{L})[Q]$ is computed, the other $\varphi_j(\mathcal{L})[Q]$ can be recursively evaluated by using the relation (\ref{1.3b}), that is
\begin{equation}\label{3.13}
\varphi_j(\mathcal{L})[Q]\approx T_{j,m}(\mathcal{L})[Q]:=\mathcal{L}T_{j+1,m}(\mathcal{L})[Q]+\frac{1}{j!}Q,~~j=l-1,l-2,\cdots,1.
\end{equation}
Obviously, $T_{k,m}(\mathcal{L})$ is the order of $m+l-k$ truncated Taylor approximation to $\varphi_k(\mathcal{L}),$ i.e., $\varphi_k(\mathcal{L})=T_{k,m}(\mathcal{L})+\mathcal{O}(\|\mathcal{L}^{m+l-k+1}\|)$. This process involves $l-1$ additional matrix-matrix products.

As the main ingredient we require a method to implement the operator exponential $\varphi_0(2^{k}(\mathcal{L})[\cdot]$ involved in (\ref{3.11}). Here we use "$\cdot$" to denote the matrix being acted on. An observation is based on the order $m+l$ truncated Taylor series $T_{0,m}(\mathcal{L})$ of $\varphi_0(\mathcal{L}),$ i.e.,
\begin{equation}\label{3.14}
 \varphi_0(\mathcal{L})\approx T_{0,m}(\mathcal{L}) := \sum\limits^{m+l}_{k=0}\frac{{\mathcal{L}}^k}{k!}.
\end{equation}
Since $\varphi_0(2^{k}\mathcal{L})=\varphi_0(\mathcal{L})^{2^{k}},$ we can naturally approximate $\varphi_0(2^{k}\mathcal{L})$  by $T_{0,m}({\mathcal{L}})^{2^k}.$
Substituting all the above approximations into (\ref{3.11}), we then recursively evaluate $\varphi_i(\mathcal{L}_A)[Q]$ as
\begin{eqnarray}\label{3.15}
T_{i,m}(2^k\mathcal{L})[Q]=\frac{1}{2^i} \left(\left(T_{0,m}(\mathcal{L})\right)^{2^{k-1}} \left[T_{i,m}(2^{k-1}\mathcal{L})[Q]\right]+ \sum\limits^{i}_{j=1}\frac{1}{(i-j)!}T_{j,m}(2^{k-1}\mathcal{L})[Q]\right),~1\leq i\leq l
\end{eqnarray}
for $k=0,1,\dots,s.$

\subsection{Choice of the parameters}\label{sec:3.2}
The above procedure has two key parameters, the scaling parameter $s$ and the degree $m$ of operator polynomial $T_{l,m}(\mathcal{L}).$ These need to be chosen
appropriately. We are using a quasi-backward error analysis to determine these parameters.

Let
\begin{equation}\label{3.21}
\Omega_m:=\{ \mathcal{L}:~~\rho (e^{-\mathcal{L}}T_{0,m}(\mathcal{L})-I_N)<1,~\mathcal{L}\in \text{Lyap}(N)\},
\end{equation}
where $\rho$ is the spectral radius and $T_{0,m}$ is defined as in (\ref{3.14}). Then the operator function
\begin{equation}\label{3.22}
h_{m+l}(\mathcal{L})=\log(e^{-\mathcal{L}}T_{0,m}(\mathcal{L}))
\end{equation}
is defined for $\mathcal{L} \in \Omega_m,$ and it commutes with $\mathcal{L}$, where $\log$ denotes the principal logarithm.
Over $\Omega_m,$  the function
$h_{m+l}(\mathcal{L})$ has an infinite power series expansion
\begin{equation}\label{3.23}
h_{m+l}(\mathcal{L})=\sum\limits^\infty_{k=m+l+1}c_k{\mathcal{L}}^{k}.
\end{equation}
The following theorem provides a quasi-backward error for the recursions (\ref{3.15}), which is a useful tool in choosing suitable parameters $m$ and $s.$
\begin{thm}\label{th1}Let $2^{-s}\mathcal{L}_A\in \Omega_m,$ the approximation $T_{l,m}(\mathcal{L}_A)$ generated by
recursions (\ref{3.15}) satisfies
\begin{eqnarray}\label{3.24}
T_{l,m}(\mathcal{L}_A)={\mathcal{L}_A}^{-l}\left(e^{\mathcal{L}_A+\Delta \mathcal{L}_A}-\sum\limits^{l-1}_{j=0}{\mathcal{L}_A}^j/j!\right),~~i=1,2,\ldots,l,
\end{eqnarray}
where
\begin{equation}
\Delta \mathcal{L}_A:=2^sh_{m+l}(2^{-s}\mathcal{L}_A).
\end{equation}
\end{thm}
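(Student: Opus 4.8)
The plan is to prove, by induction on the squaring index $k$, the stronger identity
\[
T_{i,m}(2^{k}\mathcal{L})=(2^{k}\mathcal{L})^{-i}\Big(E_{k}-\sum_{j=0}^{i-1}\frac{(2^{k}\mathcal{L})^{j}}{j!}\Big),\qquad 1\le i\le l,\ \ 0\le k\le s,
\]
where $\mathcal{L}=2^{-s}\mathcal{L}_A$ and $E_{k}:=\big(T_{0,m}(\mathcal{L})\big)^{2^{k}}$. Taking $k=s$ and $i=l$ then recovers (\ref{3.24}): the definition (\ref{3.22}) of $h_{m+l}$ gives $e^{h_{m+l}(\mathcal{L})}=e^{-\mathcal{L}}T_{0,m}(\mathcal{L})$, hence $T_{0,m}(\mathcal{L})=e^{\mathcal{L}+h_{m+l}(\mathcal{L})}$ (the two operators commute), and squaring $s$ times yields $E_{s}=e^{2^{s}(\mathcal{L}+h_{m+l}(\mathcal{L}))}=e^{\mathcal{L}_A+\Delta\mathcal{L}_A}$ while $2^{s}\mathcal{L}=\mathcal{L}_A$.

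For the base case $k=0$ I would establish $\mathcal{L}^{i}T_{i,m}(\mathcal{L})=T_{0,m}(\mathcal{L})-\sum_{j=0}^{i-1}\mathcal{L}^{j}/j!$ by a short induction on $i$. The definitions (\ref{3.12})--(\ref{3.14}) make the downward recursion (\ref{3.13}) valid also at $j=0$, i.e.\ $T_{0,m}=\mathcal{L}T_{1,m}+I_N$, which is the case $i=1$; the step multiplies $\mathcal{L}T_{i+1,m}=T_{i,m}-\tfrac{1}{i!}I_N$ by $\mathcal{L}^{i}$ and invokes the hypothesis. Since $T_{0,m}(\mathcal{L})=E_{0}$, this is exactly the claim at $k=0$.

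The inductive step is the crux and the main obstacle. Assuming the identity at level $k-1$, I substitute the level-$(k-1)$ expressions $\Phi_{p}:=T_{p,m}(2^{k-1}\mathcal{L})=M^{-p}\big(E-\sum_{j=0}^{p-1}M^{j}/j!\big)$ into the recursion (\ref{3.15}), writing $M=2^{k-1}\mathcal{L}$ and $E=E_{k-1}=\big(T_{0,m}(\mathcal{L})\big)^{2^{k-1}}$, and use $E^{2}=E_{k}$. What remains is the purely algebraic identity
\[
\frac{1}{2^{i}}\Big(E\,\Phi_{i}+\sum_{p=1}^{i}\frac{1}{(i-p)!}\Phi_{p}\Big)=(2M)^{-i}\Big(E^{2}-\sum_{j=0}^{i-1}\frac{(2M)^{j}}{j!}\Big),
\]
valid in the commutative algebra generated by $\mathcal{L}$. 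I would clear denominators by multiplying through by $2^{i}M^{i}$: the $E^{2}$ terms agree at once, the terms linear in $E$ cancel after reindexing $p\mapsto i-p$ in the finite sum, and the remaining $E$-free part reduces to showing $\sum_{p=1}^{i}\sum_{j=0}^{p-1}\frac{M^{i-p+j}}{(i-p)!\,j!}=\sum_{n=0}^{i-1}\frac{2^{n}M^{n}}{n!}$. Extracting the coefficient of $M^{n}$ and substituting $q=i-p$ turns the left side into $\tfrac{1}{n!}\sum_{q=0}^{n}\binom{n}{q}=\tfrac{2^{n}}{n!}$, i.e.\ the binomial identity $\sum_{q}\binom{n}{q}=2^{n}$, which closes the induction.

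Finally I would remark that the symbols $(2^{k}\mathcal{L})^{-i}$ are only formal shorthand: since $h_{m+l}(\mathcal{L})=\mathcal{O}(\mathcal{L}^{m+l+1})$ and $i\le l\le m+l+1$, the bracket $E_{k}-\sum_{j<i}(2^{k}\mathcal{L})^{j}/j!$ shares the leading Taylor coefficients of $E_{k}$ and is divisible by $(2^{k}\mathcal{L})^{i}$, so every step may equivalently be read after multiplying by $(2^{k}\mathcal{L})^{i}$; the commutativity of $h_{m+l}(\mathcal{L})$ with $\mathcal{L}$ recorded after (\ref{3.22}) is what legitimizes treating $E$ and $M$ as commuting scalars throughout.
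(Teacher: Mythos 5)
Your proof is correct and takes essentially the same approach as the paper's: induction on the squaring index $k$, with the base case obtained from the downward recursion (\ref{3.13}) and the inductive step reduced, after clearing denominators, to the same binomial identity $\sum_{q=0}^{n}\binom{n}{q}=2^{n}$ that the paper uses in its manipulation of (\ref{3.15}). The only cosmetic differences are that you carry the abstract power $E_{k}=\left(T_{0,m}(\mathcal{L})\right)^{2^{k}}$ through the induction and invoke $T_{0,m}(\mathcal{L})=e^{\mathcal{L}+h_{m+l}(\mathcal{L})}$ only at the end (the paper substitutes the exponential form from the start), and that you spell out the base-case induction on $i$ and the well-definedness of the formal inverse $(2^{k}\mathcal{L})^{-i}$, which the paper leaves implicit.
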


\begin{proof} The claim is proved by induction on $k$ using the recursions
(\ref{3.15}). Again, we use the notation $\mathcal{L}:=2^{-s}\mathcal{L}_A.$
From (\ref{3.22}), it follows that
\begin{equation}\label{3.25}
T_{0,m}(\mathcal{L})=e^{\mathcal{L}+h_{m+l}(\mathcal{L})}.
\end{equation}
Furthermore, by using (\ref{3.13}) we infer from that
\begin{equation}\label{3.26}
T_{i,m}(\mathcal{L})=\mathcal{L}^{-i}\left(e^{\mathcal{L}+h_{m+l}(\mathcal{L})}-\sum\limits^{i-1}_{j=0}{\mathcal{L}}^j/j!\right),~i=1,2,\cdots,l.
\end{equation}
Now we assume that
\begin{eqnarray}\label{3.27}
T_{i,m}(2^{k-1}\mathcal{L})=(2^{k-1}\mathcal{L})^{-i}\left(e^{2^{k-1}\mathcal{L}+2^{k-1}h_{m+l}(\mathcal{L})}-\sum\limits^{i-1}_{j=0}(2^{k-1}\mathcal{L})^j/j!\right),~i=1,2,\cdots,l.
\end{eqnarray}
The inductive step follows from
\begin{eqnarray}\label{3.28}
\begin{aligned}
T_{i,m}(2^k\mathcal{L})&=\frac{1}{2^i} \left(T_{0,m}(\mathcal{L})^{2^{k-1}} T_{i,m}(2^{k-1}\mathcal{L})+ \sum\limits^{i}_{j=1}\frac{1}{(i-j)!}T_{j,m}(2^{k-1}\mathcal{L})\right)\\
=&(2^k\mathcal{L})^{-i}\left(e^{2^k\left(\mathcal{L}+h_{m+l}(\mathcal{L})\right)}
-e^{2^{k-1}\left(\mathcal{L}+h_{m+l}(\mathcal{L})\right)}\sum\limits^{i-1}_{j=0}(2^{k-1}\mathcal{L})^j/j!\notag\right.\\
&\left.+e^{2^{k-1}\left(\mathcal{L}+h_{m+l}(\mathcal{L})\right)}\sum\limits^{i}_{j=1}\frac{(2^{k-1}\mathcal{L})^{i-j}}{(i-j)!}
-\sum\limits^{i}_{j=1}\frac{1}{(i-j)!}\sum\limits^{j-1}_{\iota=0}\frac{(2^{k-1}\mathcal{L})^{i+\iota-j}}{\iota!}\right)\\
=&(2^k\mathcal{L})^{-i}\left(e^{2^k\left(\mathcal{L}+h_{m+l}(\mathcal{L})\right)}
-\sum\limits^{i-1}_{j=0}\sum\limits^{j}_{\iota=0}\frac{1}{(j-\iota)!\iota!}(2^{k-1}\mathcal{L})^j\right)\\
=&(2^k\mathcal{L})^{-i}\left(e^{2^k\left(\mathcal{L}+h_{m+l}(\mathcal{L})\right)}-\sum\limits^{i-1}_{j=0}(2^k\mathcal{L})^j/j!\right).
\end{aligned}
\end{eqnarray}
The desired (\ref{3.24}) follows by setting $k=s$.
\end{proof}
By comparing (\ref{3.24}) with (\ref{1.3c}) we note that $T_{l,m}(\mathcal{L}_A)$ generated by the recursion (\ref{3.15}) is a
perturbation of $\varphi_l(\mathcal{L}_A).$ The
perturbation term $\Delta \mathcal{L}_A$ can be regarded as a quasi-backward error and allow us to derive error bounds.
We want to ensure that
\begin{equation}\label{3.29}
\frac{\|\Delta \mathcal{L}_A\|}{\|\mathcal{L}_A\|}=\frac{\|h_{m+l}(2^{-s}\mathcal{L}_A)\|}{\|2^{-s}\mathcal{L}_A\|}\leq \sum\limits^\infty_{k=m+l}|c_{k+1}|\cdot\|(2^{-s}\mathcal{L}_A)^{k}\|\leq\text{Tol}
\end{equation}
for a given tolerance $\text{Tol}.$

Define $\bar{h}_{m+l}(x)=\sum\limits^\infty_{k=m+l}|c_{k+1}|x^{k}$ and let
\begin{equation}\label{3.30}
\theta_{m+l}=\max{\{\theta:{\bar{h}_{m+l}(\theta)}\leq \text{Tol}\}}.
\end{equation}
If $s$ is chosen such that
\begin{equation}\label{3.31}
2^{-s}\|{\mathcal{L}_A}^{k}\|^{1/k} \leq \theta_{m+l}~~\text{for}~~k>m+l,
\end{equation}
we have
\begin{equation}\label{3.32}
\frac{\|\Delta \mathcal{L}_A\|}{\|\mathcal{L}_A\|}\leq \text{Tol}.
\end{equation}
Table \ref{tab3.1} presents the values of $\theta_{m+l}$ satisfying the quasi-backward error bound (\ref{3.30}) for $\text{Tol}=2^{-53}$ for some values of $m+l$.
\begin{table}[h]
\setlength{\abovecaptionskip}{0.cm}
\setlength{\belowcaptionskip}{-0.3cm}
\caption{Some values of $\theta_{m+l}$ such that the quasi-backward error bound (\ref{3.30}) does not exceed $\text{Tol}=2^{-53}$.}\label{tab3.1}
\begin{center}
\begin{tabular*}{\textwidth}{@{\extracolsep{\fill}}@{~}c|ccccccccccccc}
\toprule%
{$m+l$} & $6$& $8$& $10$& $12$& $14$& $16$& $18$& $20$ & $22$ & $24$ & $26$ & $28$& $30$\\
  \hline
  $\theta_{m+l}$ & $9.1\text{e-}3$ & $5.0\text{e-}2$ & $1.4\text{e-}1$ & $3.0\text{e-}1$ &$5.1\text{e-}1$&$7.8\text{e-}1$&$1.1\text{e}0$
 &$1.4\text{e}0$&$1.8\text{e}0$&$2.2\text{e}0$&$2.6\text{e}0$&$3.1\text{e}0$&$3.5\text{e}0$ \\
\bottomrule
\end{tabular*}
\end{center}
\end{table}

It is not trivial to develop a cheap and suitable method to evaluate the quantities $\|{\mathcal{L}_A}^{k}\|^{1/k}$. Overestimation could cause a larger than necessary $s$ to be chosen, which will yield a negative effect on accuracy. For any $\epsilon>0$,
there exists a consistent norm $\|\cdot\|_\epsilon$ such that $\|\mathcal{L}_A\|_\epsilon \leq 2\rho (A)+\epsilon.$ It follows that
\begin{equation}
\|{\mathcal{L}_A}^k\|_\epsilon^{1/k} \leq 2\rho (A)+\epsilon.
\end{equation}
Thus, once
\begin{equation}
2^{-s}(2\rho (A)+\epsilon) \leq\theta_{m+l},
\end{equation}
we have
\begin{equation}\label{3.33}
\frac{\|\Delta \mathcal{L}_A\|_\epsilon}{\|\mathcal{L}_A\|_\epsilon}\leq \text{Tol}.
\end{equation}
In particular, if $A$ is normal, it is easily verified that $\|\mathcal{L}_A\|_F = 2\rho (A)$, and
 the bound (\ref{3.32}) then holds for the Frobenius norm if $2^{1-s}\rho (A) \leq\theta_{m+l}.$
Unfortunately, for non-normal matrix $A$ the bound (\ref{3.33}) described by the norm $\|\cdot\|_\epsilon$ is difficult
to interpret.

Now we present a quasi-backward error bound for general norm. Following Lemma 4.1 of \cite{AlMohy2009}, one can easily verify that
\begin{equation}
\|{\mathcal{L}_A}^k\|^{1/k} \leq \alpha _p(\mathcal{L}_A),~~p(p-1)\leq k,
\end{equation}
where $\alpha _p(\mathcal{L}_A)=\max(\|{\mathcal{L}_A}^{p}\|^{1/p},\|{\mathcal{L}_A}^{p+1}\|^{1/(p+1)}).$
Choose the parameter $s$ such that
\begin{equation}
2^{-s}\alpha _p(\mathcal{L}_A)\leq \theta_{m+l},~~p(p-1)\leq m+l,
\end{equation}
the quasi-backward error bound (\ref{3.32}) holds for any consistent norm.
For given $l$ and $m,$ the value of the scaling parameter $s$ is naturally chosen as
\begin{equation}\label{3.34}
s = max\{0, \lceil \text{log}_{2}(\alpha_{m+l}^*/\theta_{m+l})\rceil\},
\end{equation}
where $\alpha_{m+l}^*$ is the smallest value of $\alpha_p(\mathcal{L}_A)$ at which the value of $s$ is minimal.

This process requires pre-evaluating $\alpha_p(\mathcal{L})$ for $p(p-1)\leq m+l$, and thus
$\|{\mathcal{L}_A}^{p}\|^{1/p}$, $\|{\mathcal{L}_A}^{p+1}\|^{1/(p+1)}$ for $p(p-1)\leq m+l$.
However, evaluating these operator norm is a nontrivial task and has to be taken into account for the computational load.
A simple approach to evaluate $\|{\mathcal{L}_A}^{k}\|^{1/k}$ is to apply the formal power series of ${\mathcal{L}_A}^k[X]$. Direct calculation shows
\begin{equation}\label{3.35}
{\mathcal{L}_A}^k[X]=\sum\limits^{k}_{j=0}\mathcal{C}_k^jA^jX(A^{k-j})^T,
\end{equation}
where $\mathcal{C}_k^j:=\frac{k!}{j!(k-j)!}$ is the binomial coefficient.
From (\ref{3.35}) we have
\begin{equation}\label{3.36}
\|{\mathcal{L}_A}^k\|^{1/k}\leq d_k:=2\max\{\|A^j\|^{1/k}\cdot\|A^{k-j}\|^{1/k},~~ j=0,1,\ldots, k\}.
\end{equation}
Thus, the value of $\|{\mathcal{L}_A}^k\|^{1/k}$ can be replaced by the upper bound $d_k$.
We can use any consistent matrix norm but it is most convenient to use the 1-norm. As did in \cite{AlMohy2009},
we apply the block 1-norm estimation algorithm of Higham and Tisseur \cite{Higham00} to evaluate the 1-norm of the power of $A$ involved.

In practical, we choose the first $m\in \{6-l, 9-l, 12-l, 16-l, 20-l, 25-l~\}$ such that $\alpha_{m+l}^*\leq \theta_{m+l}$,
where $\alpha_{m+l}^*=\min\{\alpha_p(\mathcal{L}_A), p(p-1)\leq m+l\}$, and set $s=0$.
If $\alpha_{m+l}^*>\theta_{25}$, we set $m=25-l$ and $s=\lceil\log_2(\alpha_{25}^*/\theta_{25}), 0\rceil$.
The details on procedure for their choice are summarized in Algorithm \ref{alg3.1}.

\begin{algorithm}[htb]
\caption{this algorithm computes the parameters $m$ and $s$ by checking each putative $m$ such that the relative quasi-backward error can achieve prescribed accuracy.}\label{alg3.1}
\begin{algorithmic}[1]
\REQUIRE~$A,Q\in \mathbb{R}^{N\times N}$,~$l.$
\STATE $M=\{6,~  9,~  12,~  16,~  20,~  25\},$ ~$s=0,$ ~$p_{max}=5.$
\FOR {$p = 2:p_{\max}+1$}
\STATE Estimate $d_p =2\max{\left\{\|A^k\|_1\cdot\|A^{p-k}\|_1,~k=0,1,\ldots,p\right\}};$
\ENDFOR
\STATE Compute $\alpha_p=\max (d_p^{1/p}, ~d_{p+1}^{1/{(p+1)}}),$~$p=1,2,\cdots,p_{\max};$
\FOR {each $ m+l\in M$}
\STATE $\alpha^*_{m+l}=\min\{\alpha_p,~p(p-1)\leq m+l\}.$
\IF{$\alpha^*_{m+l}\leq\theta_{ l+m}$} \RETURN $m;$ \ENDIF
\ENDFOR
\STATE $s=\max(\lceil \log_2(\alpha^*_{m+l}/\theta_{m +l})\rceil,0).$
\ENSURE~$m,~s.$
\end{algorithmic}
\end{algorithm}

\section{Implementation issues}\label{sec:4}

This section describes some implementation details of the algorithm proposed in the above section.
A main problem is that the computational complexity of recursions (\ref{3.15}) grows exponentially with $s$,
which is mainly due to the approach of implementing the operator exponential $\varphi_0(2^{k}\mathcal{L})[\cdot]$ involved.

Alternatively, by using the identity (\ref{2.1a}), one can implement $\varphi_0(2^{k}\mathcal{L})[\cdot]$ by applying the following
coupled recurrences:
\begin{equation}\label{4.1}
\left\{
\begin{array}{l}
e^{2^k\tilde{A}}=e^{2^{k-1}\tilde{A}}\cdot e^{2^{k-1}\tilde{A}},\\
\varphi_0(2^k\mathcal{L})[\cdot]=e^{2^k\tilde{A}}[\cdot]e^{2^k\tilde{A}^T},
\end{array}
\right.
\end{equation}
where $\tilde{A}=2^{-s}A.$
This process requires computing the matrix exponential $e^{\tilde{A}}$ explicitly.
There are several established methods in the existing literature for carrying out this task, see e.g.,
\cite{AlMohy2009,Caliari19,Defez2018,Sastre19,Sastre2015,Higham2005,Sidje1998,Ward} and the review \cite{Moler2003}. Since the scaled matrix $\tilde{A}$ has small norm,
here we suggest approximating the matrix exponential using the order of $m+l$ truncated Taylor series
\begin{equation}\label{4.2}
e^{\tilde{A}}\approx T_{0,m}(\tilde{A}):= \sum\limits^{m+l}_{k=0}\frac{{\tilde{A}}^k}{k!}.
\end{equation}
The matrix polynomial $T_{0,m}(\tilde{A})$ can be computed efficiently by using the optimal Paterson-Stockmeyer method \cite{Paterson}.
If the value of $m+l$ is from the optimal index set $\mathbb{M}=\{2, 4, 6, 9, 12, 16, 20, 25, 30, 36,\ldots\},$ in which
the matrix polynomial $T_{0,m}(\tilde{A})$ will be the best approximation to $e^{\tilde{A}}$ at the same number of matrix-matrix multiplications,
the number of matrix-matrix product for evaluating $T_{0,m}(\tilde{A})$ is
\begin{equation}\label{4.3}
\pi_{m+l}=\left\lceil\sqrt {m+l}~\right\rceil+\left\lfloor \frac{m+l}{\lceil\sqrt {m+l}~\rceil}\right\rfloor-2.
\end{equation}
For more details see \cite[p. 72-74]{Higham}. A full sketch of the procedure for solving $\varphi_l(\mathcal{L}_A)[Q]$ is summarized in Algorithm \ref{alg4.1}.

It is clear that the matrix-matrix multiplications constitute the main cost of Algorithm \ref{alg4.1}
since the rest of the required operations is limited to several matrix additions and scalar-matrix multiplications.
All together, the total number of matrix-matrix multiplications $C_{l,m}$ required to evaluate $T_{l,m}(\mathcal{L}_A)[Q]$ is
\begin{equation}\label{2.21}
C_{l,m}:=\left\{
\begin{array}{l}
m,~~s=0,\\
\pi_{m+l}+m+l+1+(s-1)(2l+1),~~s\geq1,\\
\end{array}
\right.
\end{equation}
where $\pi_{m+l}$ is defined as (\ref{4.3}).

\begin{algorithm}[htb]
\caption{~$\textsf{philyap}$: this algorithm computes $\varphi_l(\mathcal{L}_A)[Q]$ based on the modified scaling and squaring combination with Taylor series.}\label{alg4.1}
\begin{algorithmic}[1]
\REQUIRE $A, Q \in \mathbb{R}^{N\times N},$ $l$;
\STATE Select the values of $m$ and $s$ using Algorithm \ref{alg3.1};
\STATE Compute $\tilde{A}=2^{-s}A;$
\STATE Compute $T_{l,m}(\mathcal{L}_{\tilde{A}})[Q]$ by Horner's method;
\IF {$s=0$} \RETURN $T_{l,m}(\mathcal{L}_{\tilde{A}})[Q];$ \ENDIF
\FOR{$k=l-1:1$}
\STATE Compute $T_{k,m}(\mathcal{L}_{\tilde{A}})[Q]:=\mathcal{L}_{\tilde{A}}\left[T_{k+1,m}(\mathcal{L}_{\tilde{A}})[Q]\right]+\frac{1}{k!}Q;$
\ENDFOR
\STATE Compute $T_{0,m}(\tilde{A})$ by the Paterson-Stockmeyer method;
\FOR{$i=1:s-1$}
\STATE Compute $T_{k,m}(2^i\mathcal{L}_{\tilde{A}})[Q]:=\frac{1}{2^k} \left(T_{0,m}(2^{i-1}\tilde{A})\cdot T_{k,m}(2^{i-1}\mathcal{L}_{\tilde{A}})[Q]\cdot T_{0,m}(2^{i-1}\tilde{A})^T+ \sum\limits^{k}_{j=1}\frac{1}{(k-j)!}T_{j,m}(2^{i-1}\mathcal{L}_{\tilde{A}})[Q]\right),~k=1,\cdots,l$;
\STATE Compute $T_{0,m}(2^{i}\tilde{A}):=T_{0,m}(2^{i-1}\tilde{A})\cdot T_{0,m}(2^{i-1}\tilde{A})$;
\ENDFOR
\STATE Compute $T_{l,m}(\mathcal{L}_A)[Q]=\frac{1}{2^l} \left(T_{0,m}(2^{s-1}\tilde{A})\cdot T_{l,m}(2^{s-1}\mathcal{L}_{\tilde{A}})[Q]\cdot T_{0,m}(2^{s-1}\tilde{A})^T+ \sum\limits^{l}_{j=1}\frac{1}{(l-j)!}T_{j,m}(2^{s-1}\mathcal{L}_{\tilde{A}})[Q]\right)$;
\ENSURE~$T_{l,m}(\mathcal{L}_A)[Q].$
\end{algorithmic}
\end{algorithm}

\section{Numerical experiments}
In this section we present a few numerical experiments to test the performance of the method that has been presented in Section \ref{sec:4}.
All the tests are performed under Windows 10 and MATLAB R2018b running on a desktop with an Intel Core i7 processor with 2.1 GHz and RAM 64 GB.
The relative error is measured in the 1-norm, i.e.,
\begin{equation}\label{5.1}
Error= \frac{\|Y-\widehat{Y}\|_1}{\|Y\|_1},
\end{equation}
where $\widehat{Y}$ is the computed solution and $Y$ is the reference solution.

To benchmark our method, in the first two experiments we have run comparison tests with some MATLAB functions tailored for the matrix $\varphi$-function,
since the computation of operator $\varphi$-function is mathematically equivalent to computing the action of a matrix $\varphi$-function on vector by (\ref{2.6}).
The codes involved are listed as follows.

$\bullet$ The MATLAB function \textsf{expmv} of Al-Mohy and Higham \cite{AlMohy2011} computes the action of
matrix exponential on a vector based on matrix-vector products.
The function can be utilized to evaluate the matrix $\varphi$-functions by computing a slightly larger matrix exponential.

$\bullet$ The MATLAB function \textsf{phimv(s)} of Li, Yang and Lan \cite{Li2022} computes the action of the matrix $\varphi$-functions
on a vector. The method is an implementation of the modified scaling and squaring procedure combined with a truncated Taylor series.

$\bullet$ \textsf{kiops} is the MATLAB function due to Gaudreault, Rainwater and Tokman \cite{Tokman18}, which
computes a linear combination of $\varphi$-functions acting on certain vectors using Krylov-based method combined with time-stepping.
It can be viewed as an improved version of \textsf{phipm} proposed in \cite{Niesen2012}.

Unless otherwise stated, we run all these MATLAB functions with their default parameters and the
convergence tolerance in every algorithm is set to the machine epsilon $2^{-53}.$

\begin{example} \label{exa1}
In the first experiment, we try to show the performance of \textsf{philyap} by using sixty one Lyapunov operators $\mathcal{L}_A$.
The first 47 operators are generated by matrices of size $8\times 8$ from the subroutine \textsf{matrix} in the Matrix Computation Toolbox \cite{Highamtool}.
 The other fourteen operators are generated by matrices of dimensions $2-10$ from \cite[Ex. 2]{Higham2003}, \cite[Ex. 3.10]{DP00},
 \cite[p. 655]{KL1998}, \cite[p. 370]{NH1995}, \cite[Test Cases 1-4]{Ward}, respectively.
For each Lyapunov operator $\mathcal{L}_A$, and a different randomly generated symmetric matrix $Q$ for each $\mathcal{L}_A$,
we compute $\varphi_l(\mathcal{L}_A)[Q]$ for $l=1,2,\ldots,8.$ The implementations are compared with MATLAB functions \textsf{expmv} and \textsf{phimv(s)}.
In this experiment, the reference solutions are computed using the function \textsf{phipade} from the software package EXPINT \cite{Berland07}
based on [17/17] Pad\'e approximation at 100-digit precision using MATLAB's Symbolic Math Toolbox.

Figs. \ref{fig5.1} and \ref{fig5.2} present the relative errors and the performances on execution times of the three methods, respectively.
 Each figure contains eight plots, which correspond with the results for computing $\varphi_l(\mathcal{L}_A)[Q]$ for $l=1,2,\ldots,8.$

From Fig. \ref{fig5.1} we see that the method \textsf{philyap} can perform in a numerically similar way with \textsf{phimv(s)} and it is generally more accurate than \textsf{expmv}.
Table \ref{tab5.1} lists the percentage of cases in which the relative errors of \textsf{philyap} are lower than the relative errors of MATLAB codes
 \textsf{expmv} and \textsf{phimv}. Results show that \textsf{philyap} is more accurate than \textsf{phimv(s)} and \textsf{expmv} in the majority of cases.

\begin{figure}[H]
\begin{minipage}{0.5\linewidth}
\centering
\includegraphics[width=7cm,height=5cm]{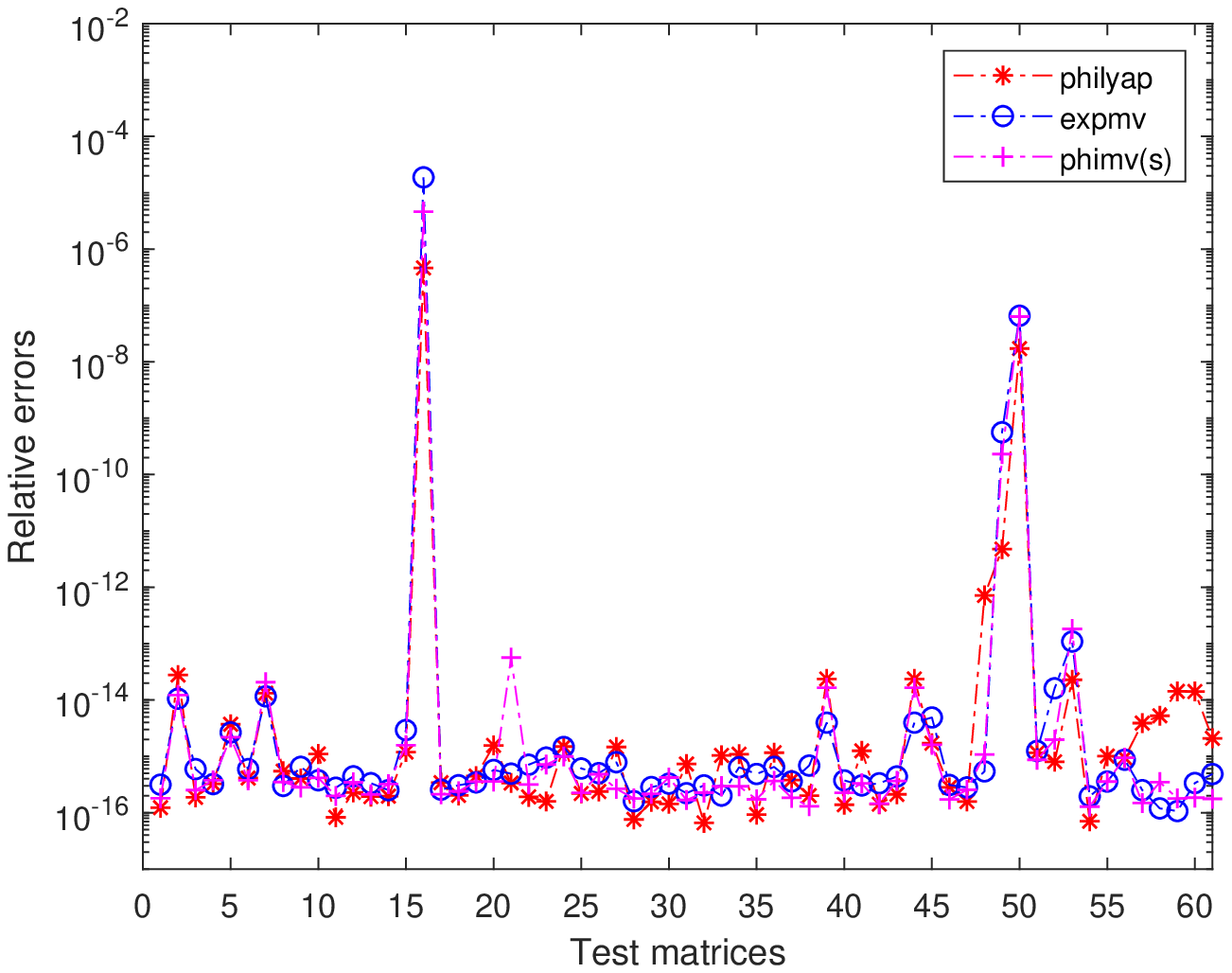}\\
\end{minipage}
\mbox{\hspace{-1.5cm}}
\begin{minipage}{0.5\linewidth}
\centering
\includegraphics[width=7cm,height=5cm]{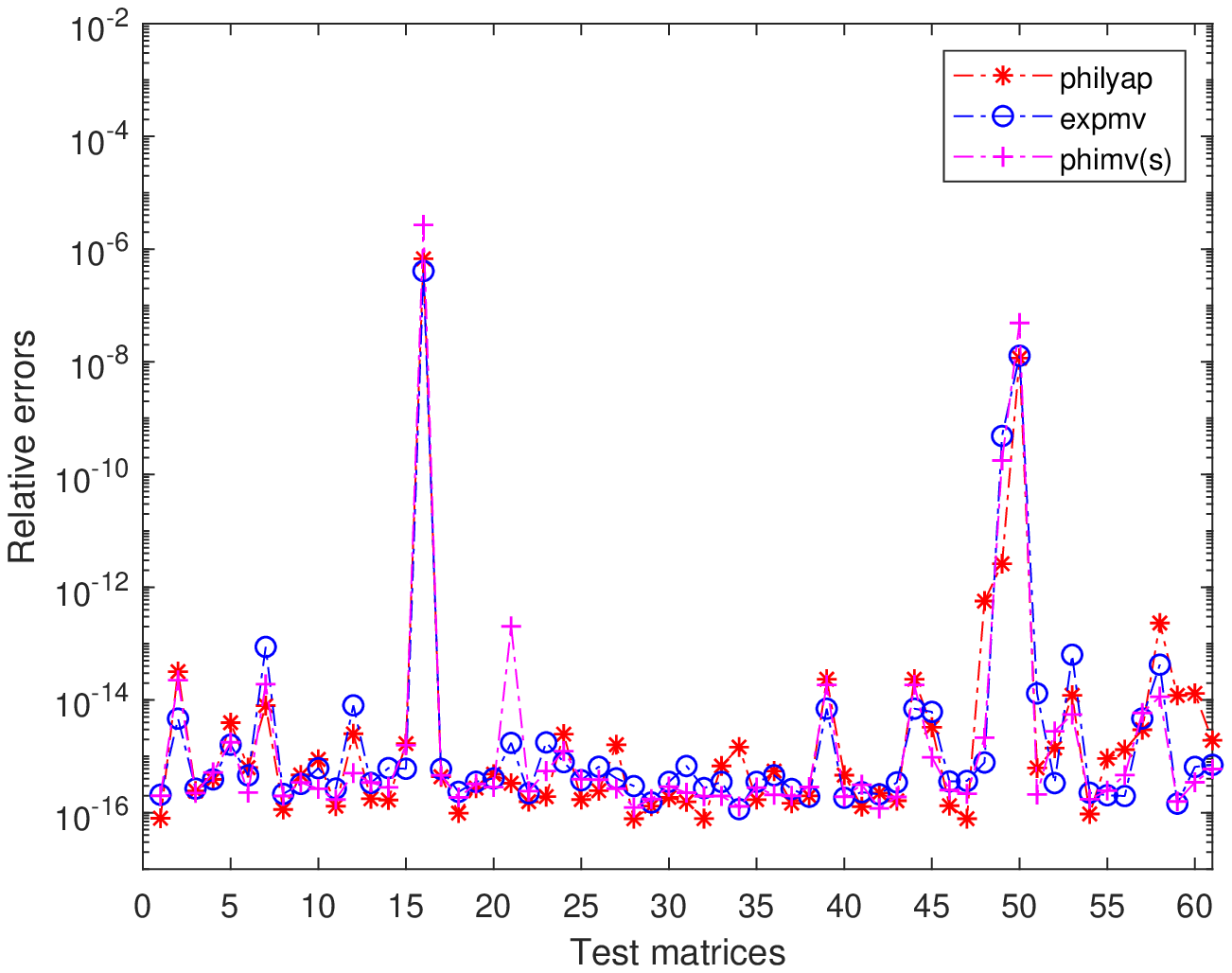}\\
\end{minipage}\\
\begin{minipage}{0.5\linewidth}
\centering
\includegraphics[width=7cm,height=5cm]{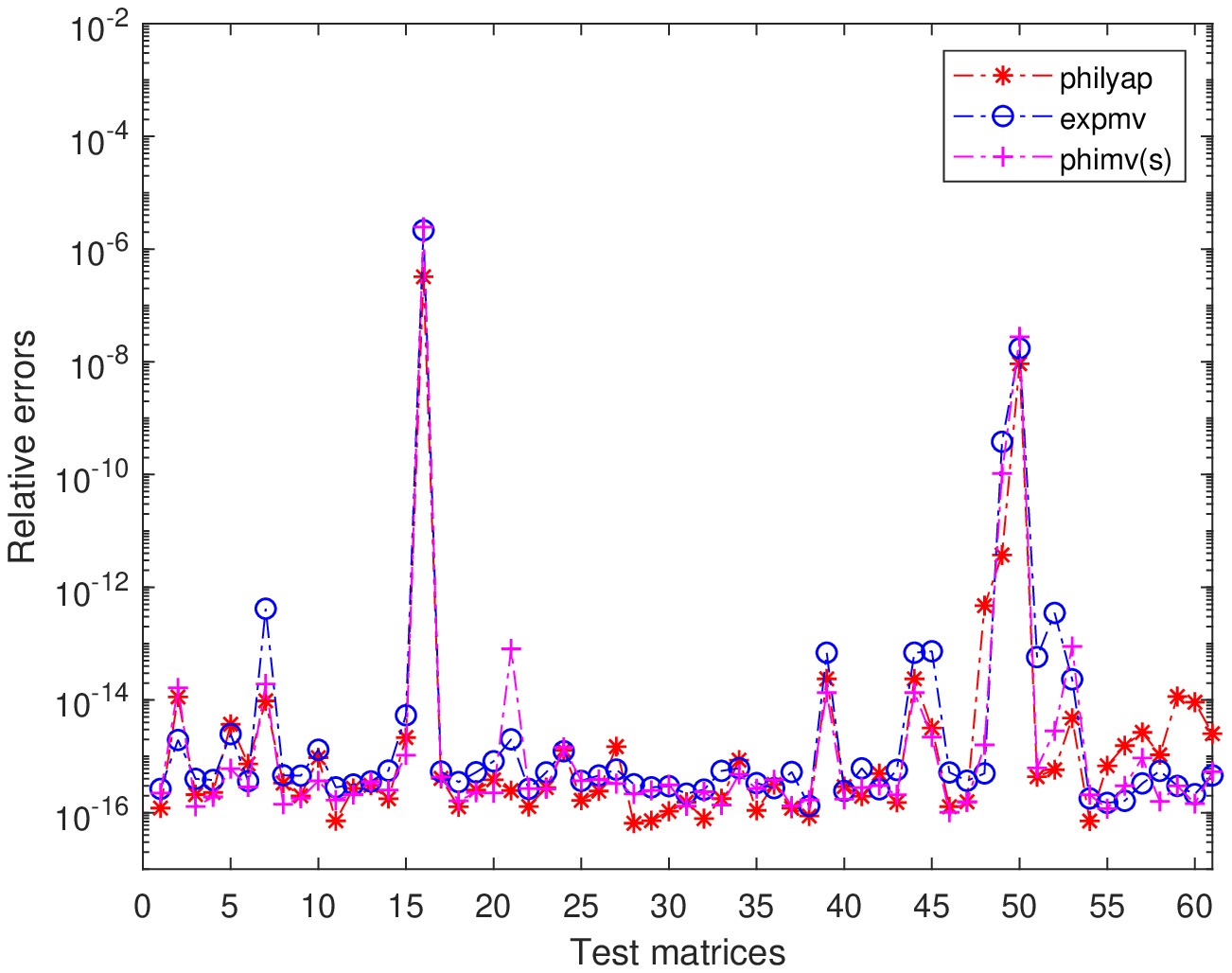}\\
\end{minipage}
\mbox{\hspace{-1.5cm}}
\begin{minipage}{0.5\linewidth}
\centering
\includegraphics[width=7cm,height=5cm]{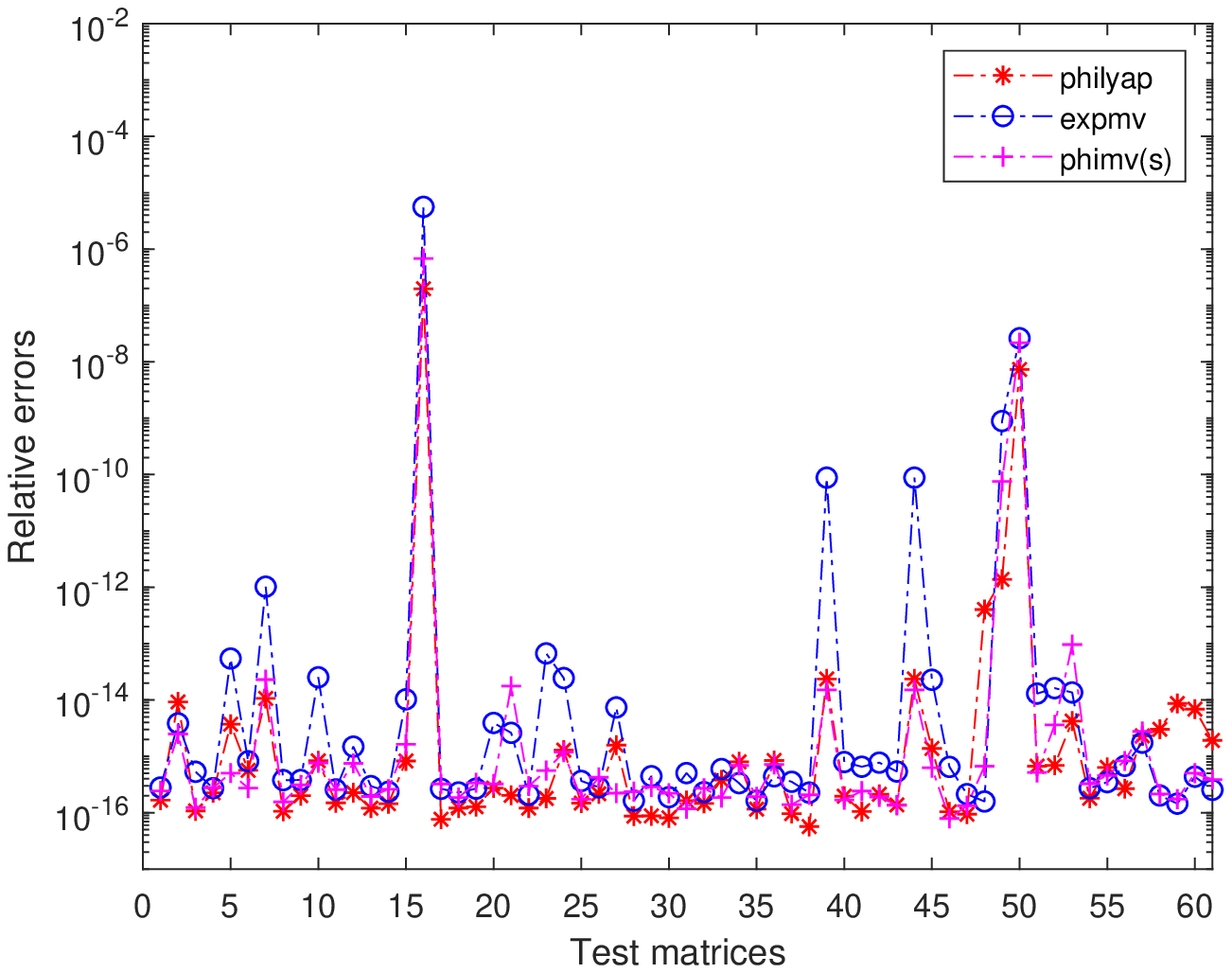}\\
\end{minipage}\\
\begin{minipage}{0.5\linewidth}
\centering
\includegraphics[width=7cm,height=5cm]{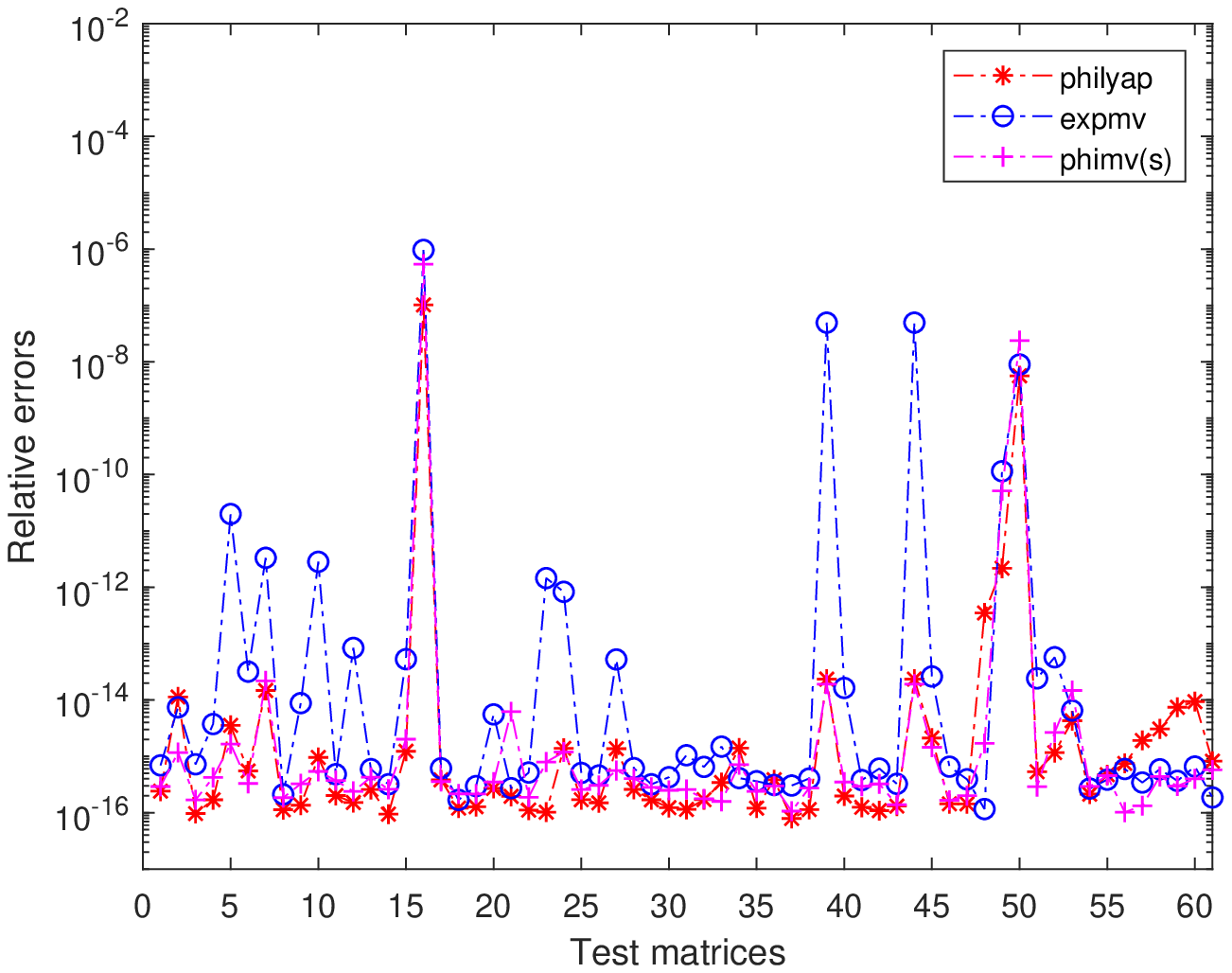}\\
\end{minipage}
\mbox{\hspace{-1.5cm}}
\begin{minipage}{0.5\linewidth}
\centering
\includegraphics[width=7cm,height=5cm]{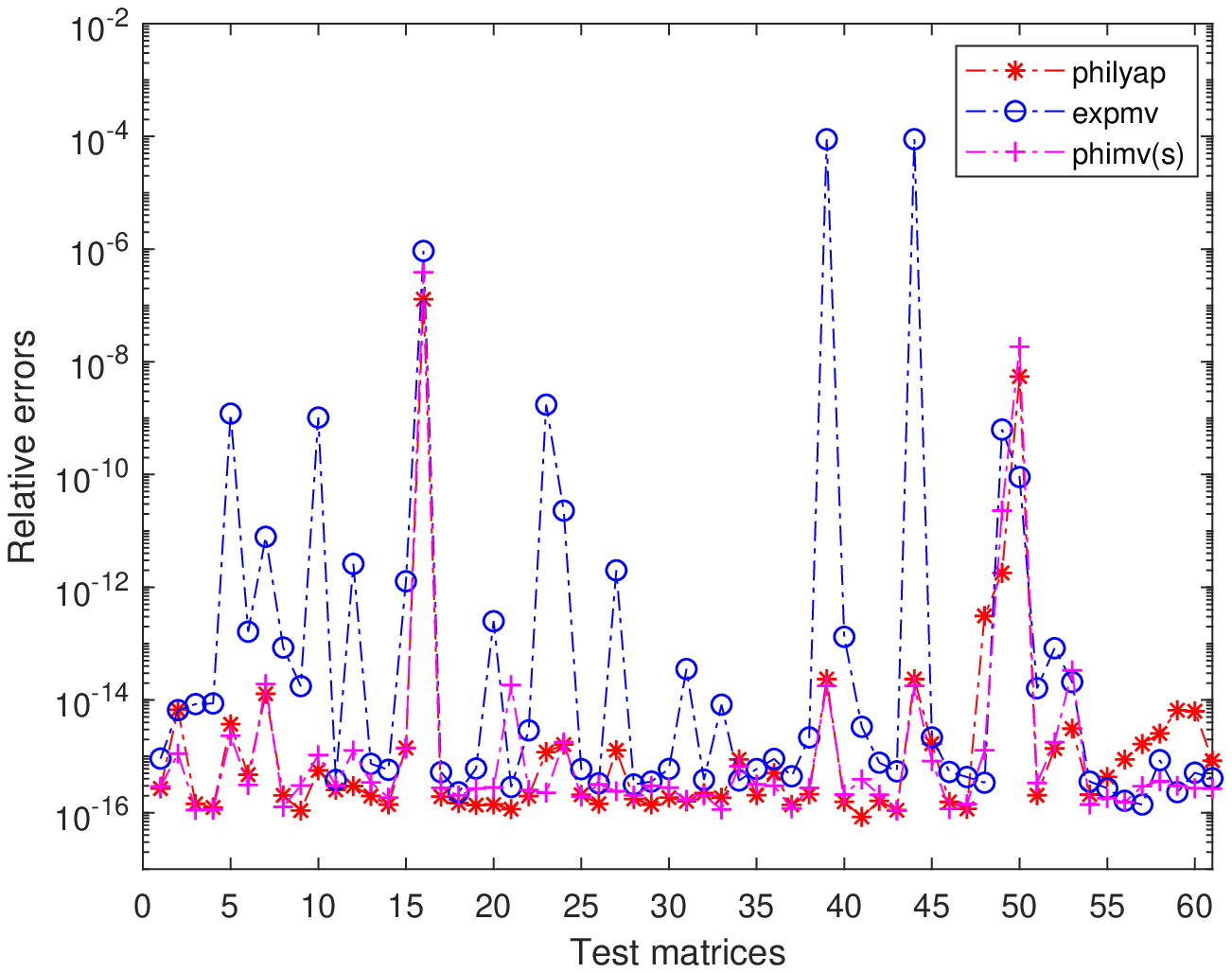}\\
\end{minipage}\\
\begin{minipage}{0.5\linewidth}
\centering
\includegraphics[width=7cm,height=5cm]{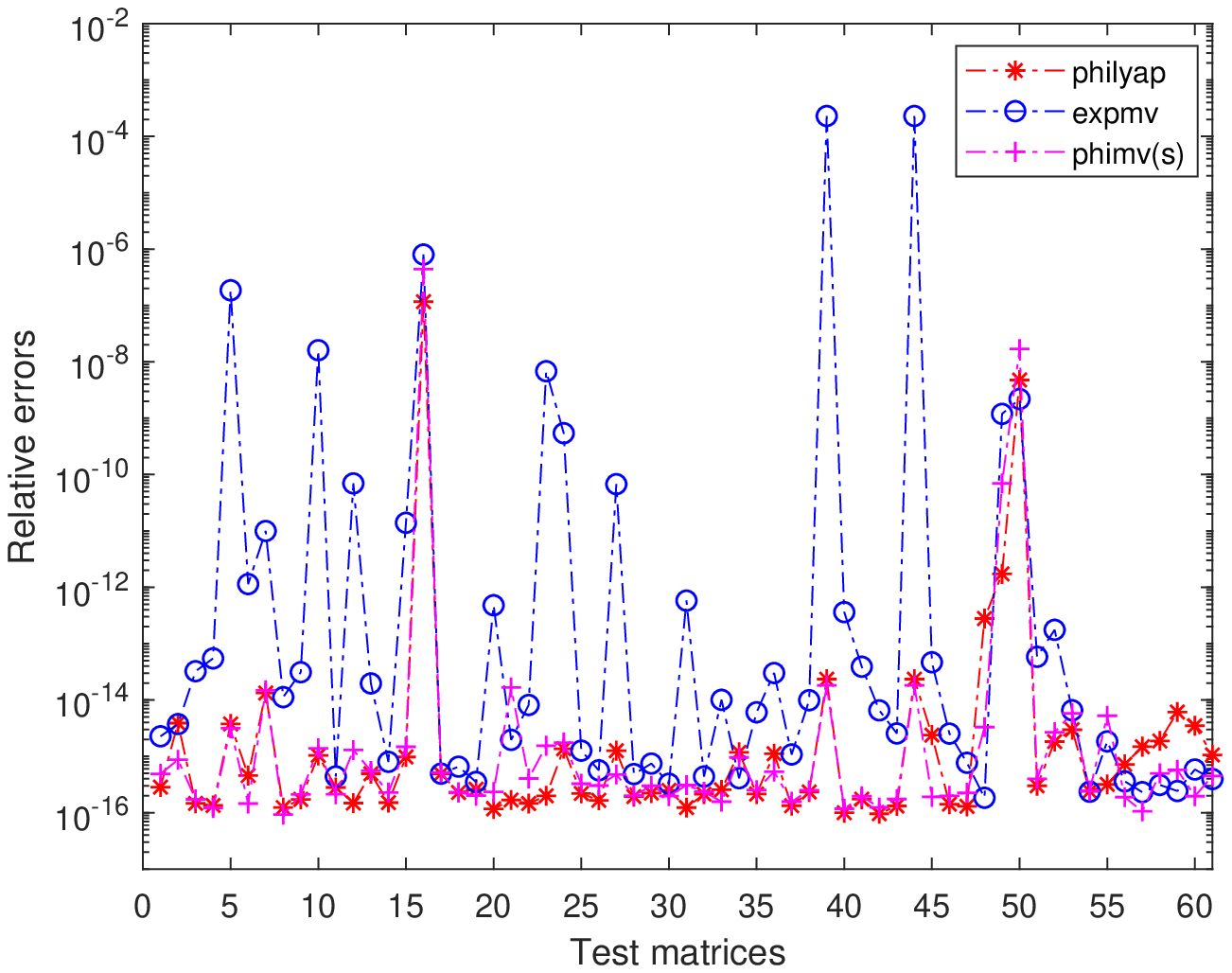}\\
\end{minipage}
\mbox{\hspace{-1.5cm}}
\begin{minipage}{0.5\linewidth}
\centering
\includegraphics[width=7cm,height=5cm]{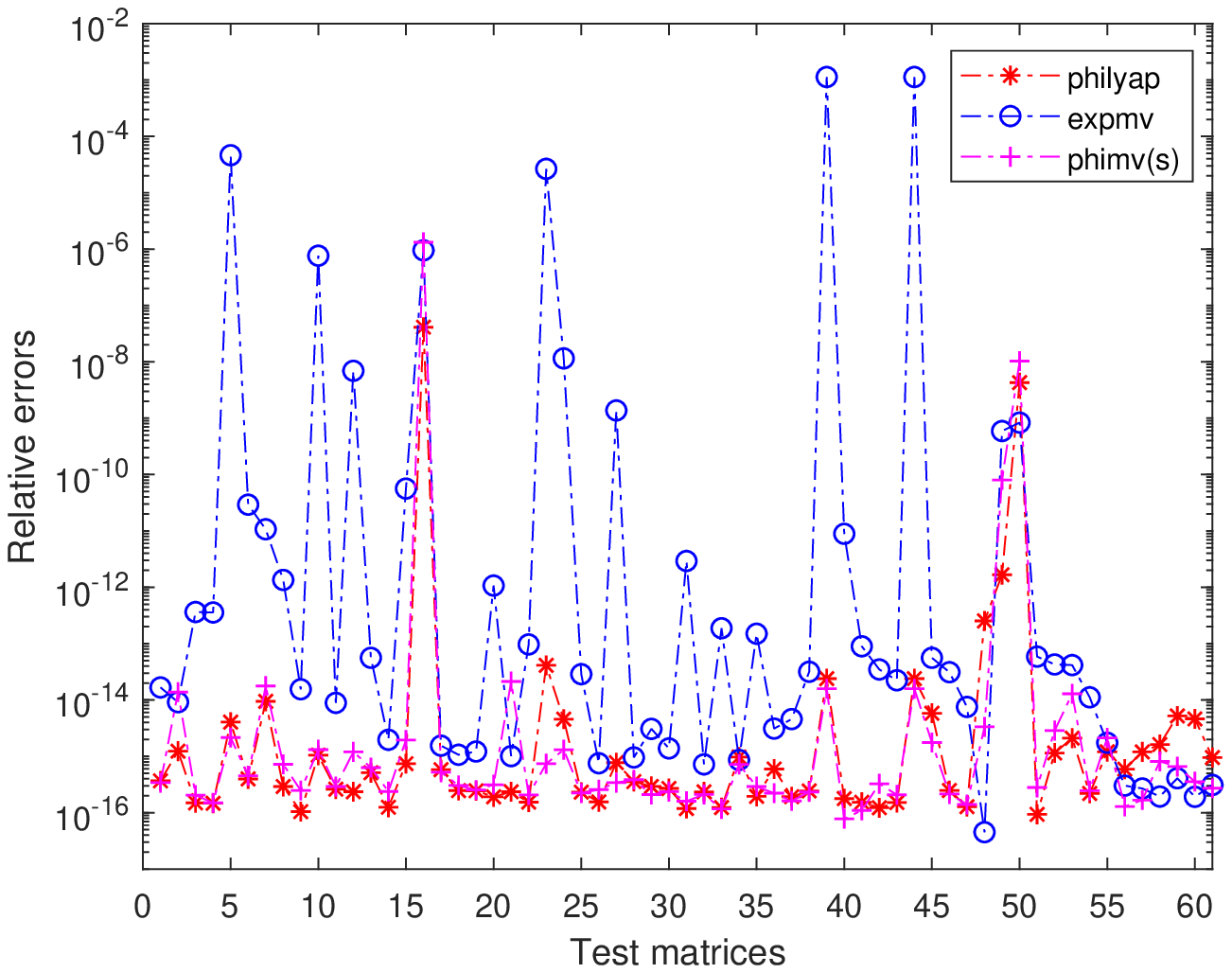}\\
\end{minipage}
\caption{ From left to right we plot the relative errors when computing $\varphi_l(\mathcal{L}_A)[Q]$ for $l= 1, . . . , 8$ of Experiment \ref{exa1}.}\label{fig5.1}
\end{figure}

\begin{figure}[H]
\begin{minipage}{0.5\linewidth}
\centering
\includegraphics[width=7cm,height=5cm]{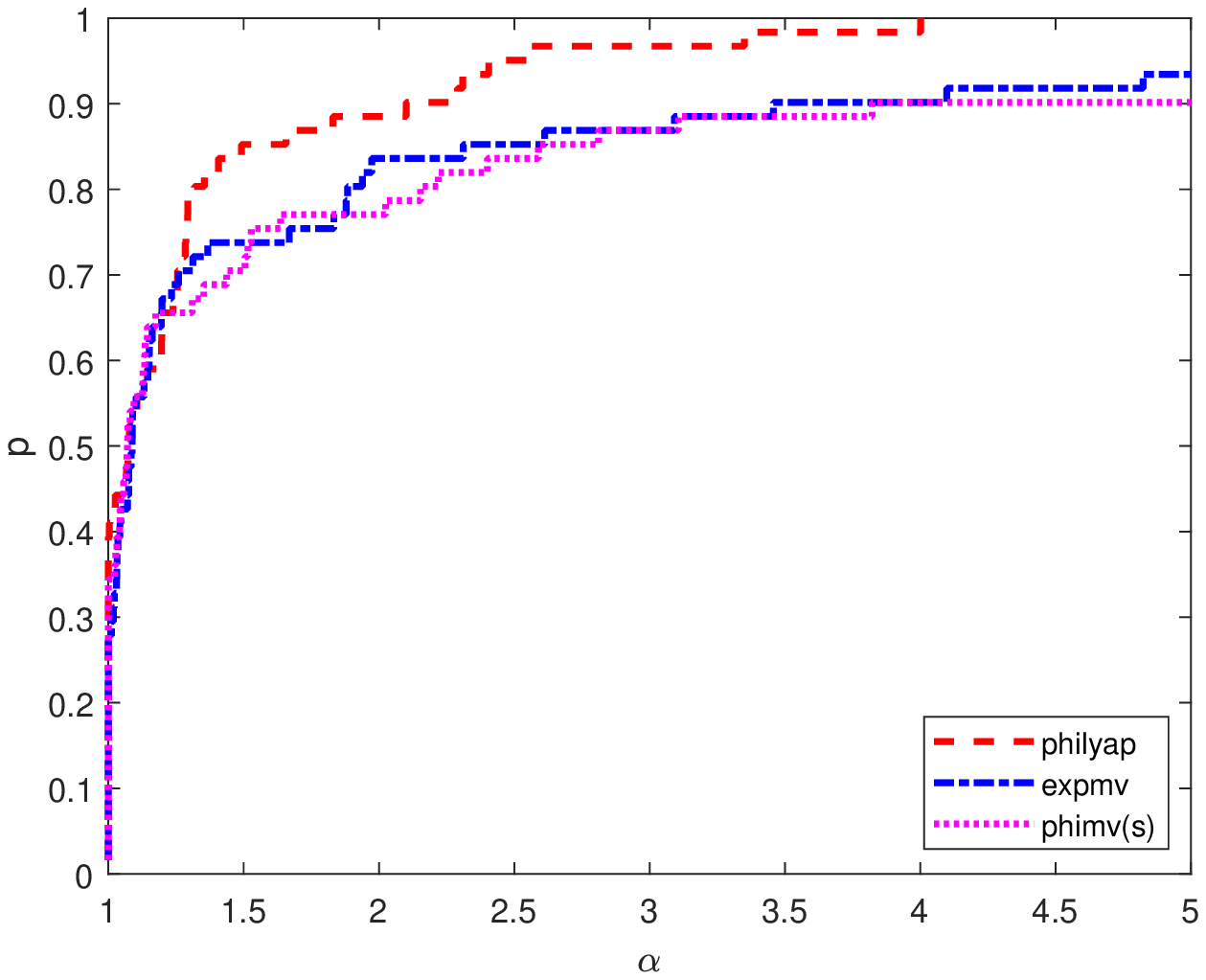}\\
\end{minipage}
\mbox{\hspace{-1.5cm}}
\begin{minipage}{0.5\linewidth}
\centering
\includegraphics[width=7cm,height=5cm]{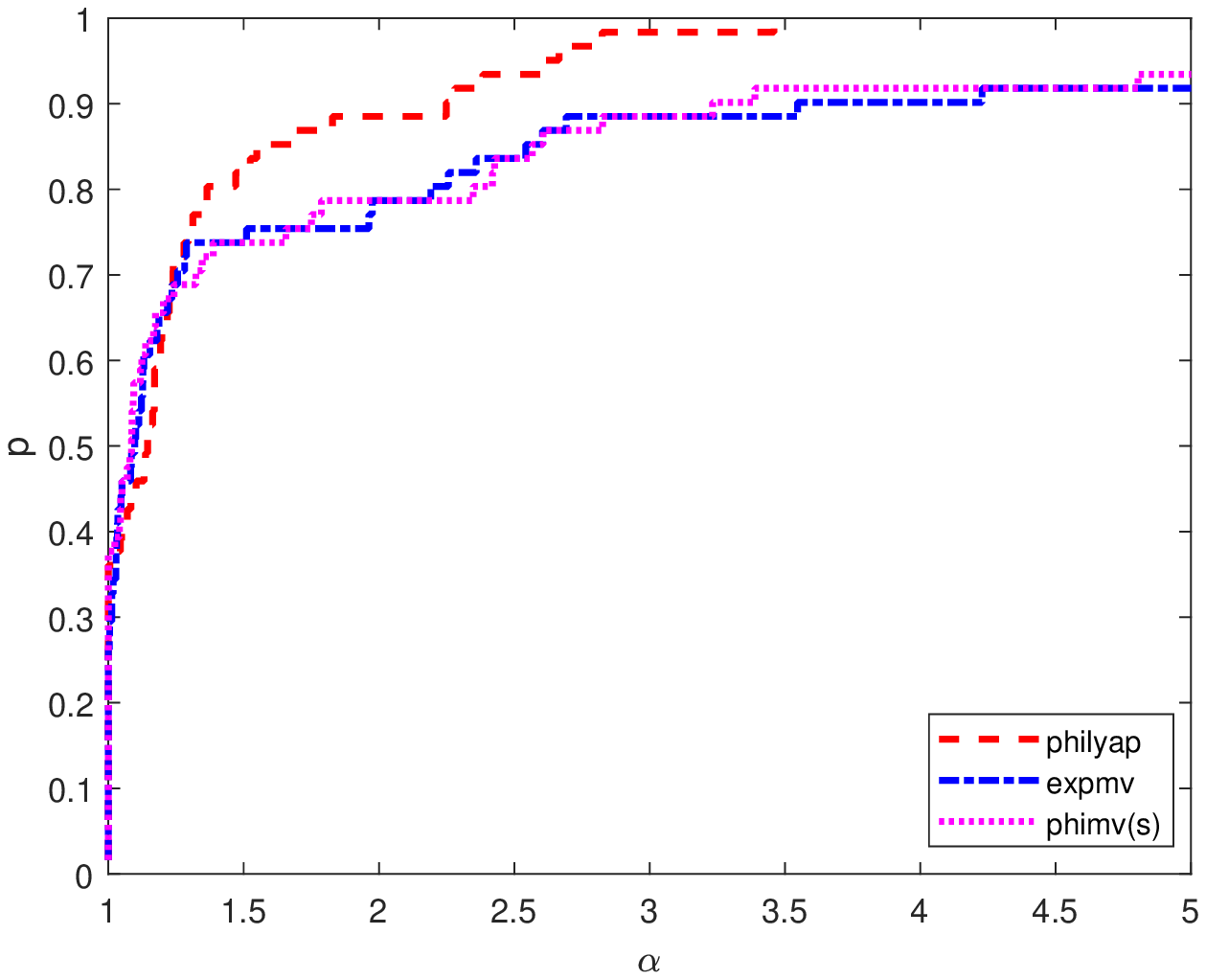}\\
\end{minipage}\\
\begin{minipage}{0.5\linewidth}
\centering
\includegraphics[width=7cm,height=5cm]{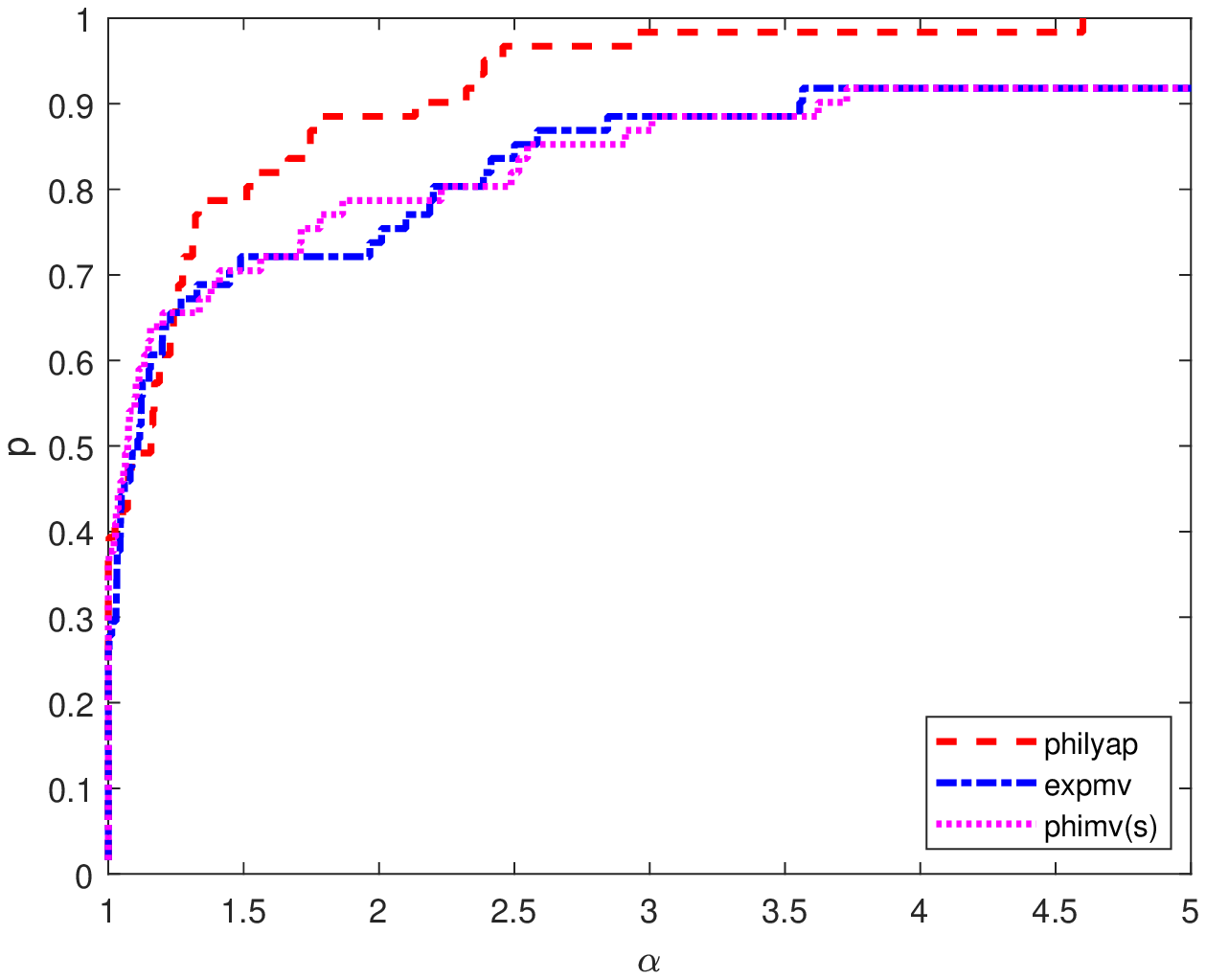}\\
\end{minipage}
\mbox{\hspace{-1.5cm}}
\begin{minipage}{0.5\linewidth}
\centering
\includegraphics[width=7cm,height=5cm]{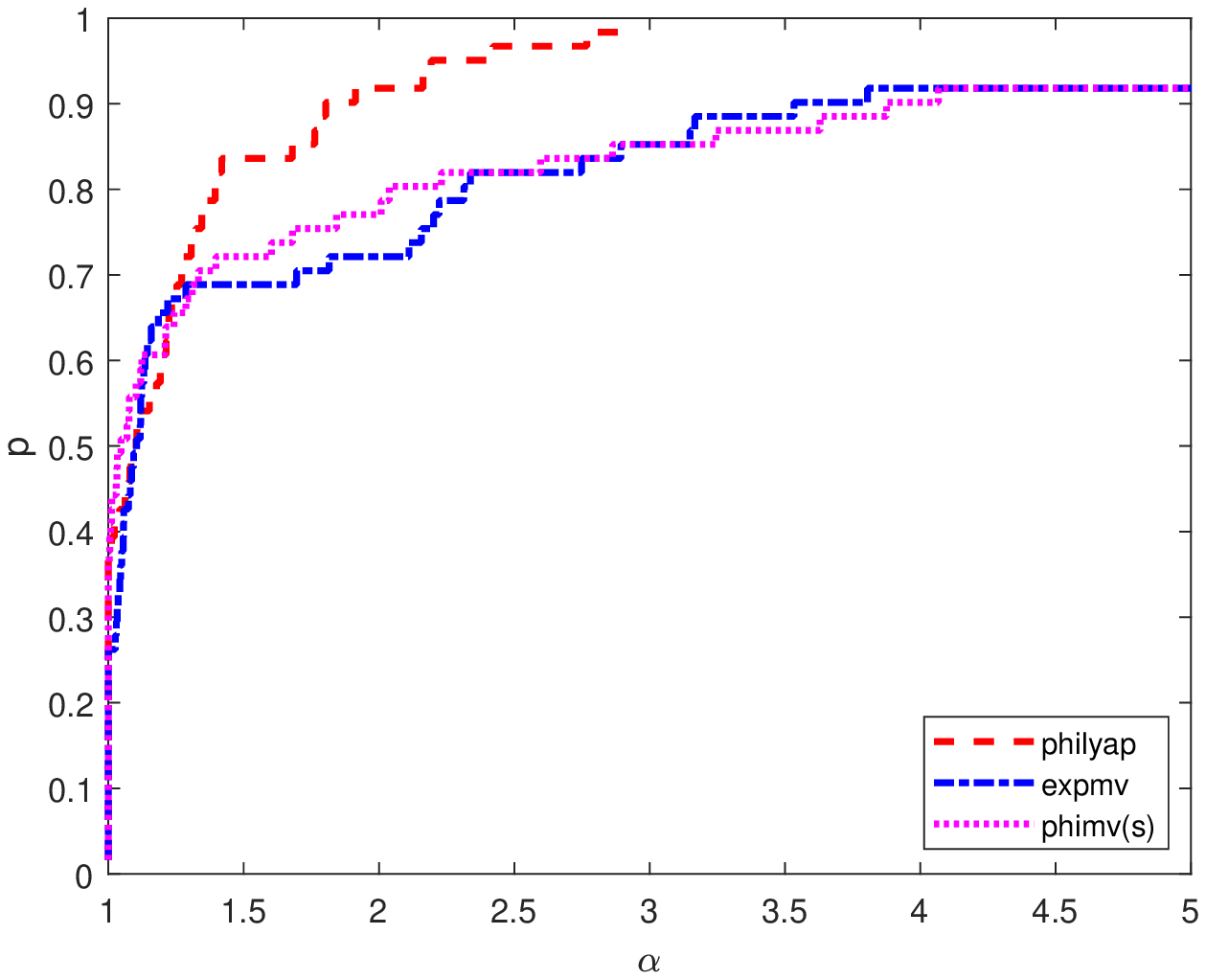}\\
\end{minipage}\\
\begin{minipage}{0.5\linewidth}
\centering
\includegraphics[width=7cm,height=5cm]{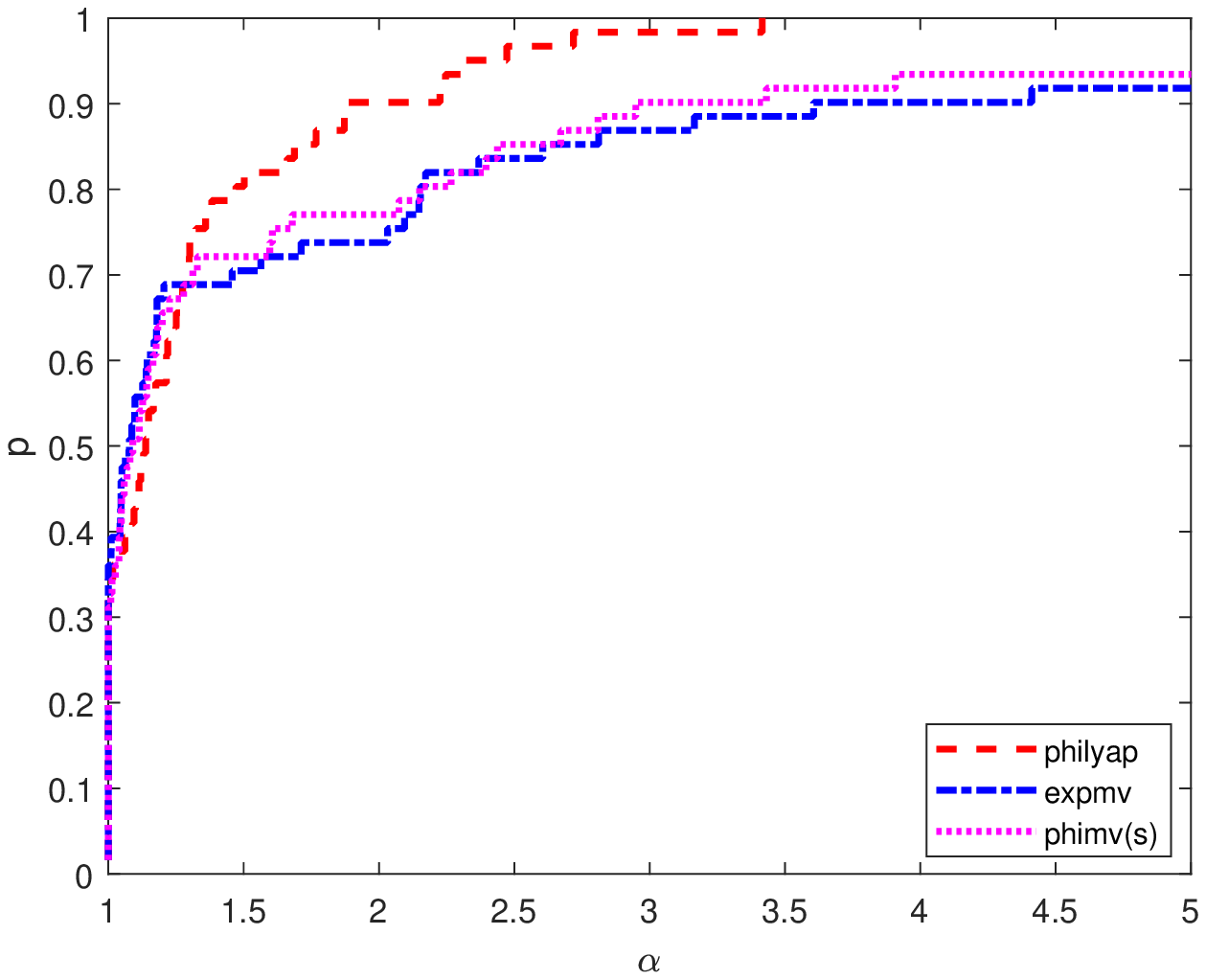}\\
\end{minipage}
\mbox{\hspace{-1.5cm}}
\begin{minipage}{0.5\linewidth}
\centering
\includegraphics[width=7cm,height=5cm]{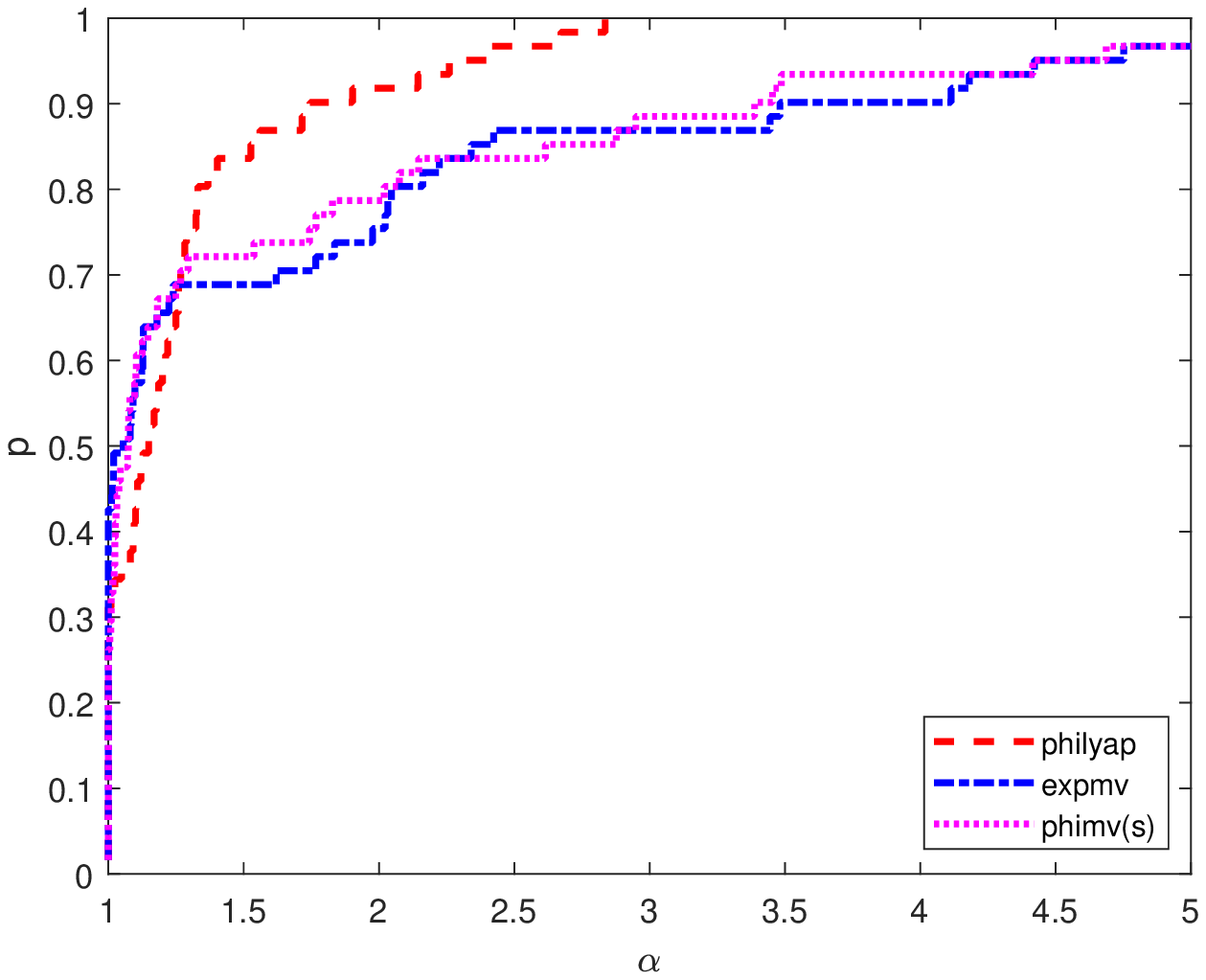}\\
\end{minipage}\\
\begin{minipage}{0.5\linewidth}
\centering
\includegraphics[width=7cm,height=5cm]{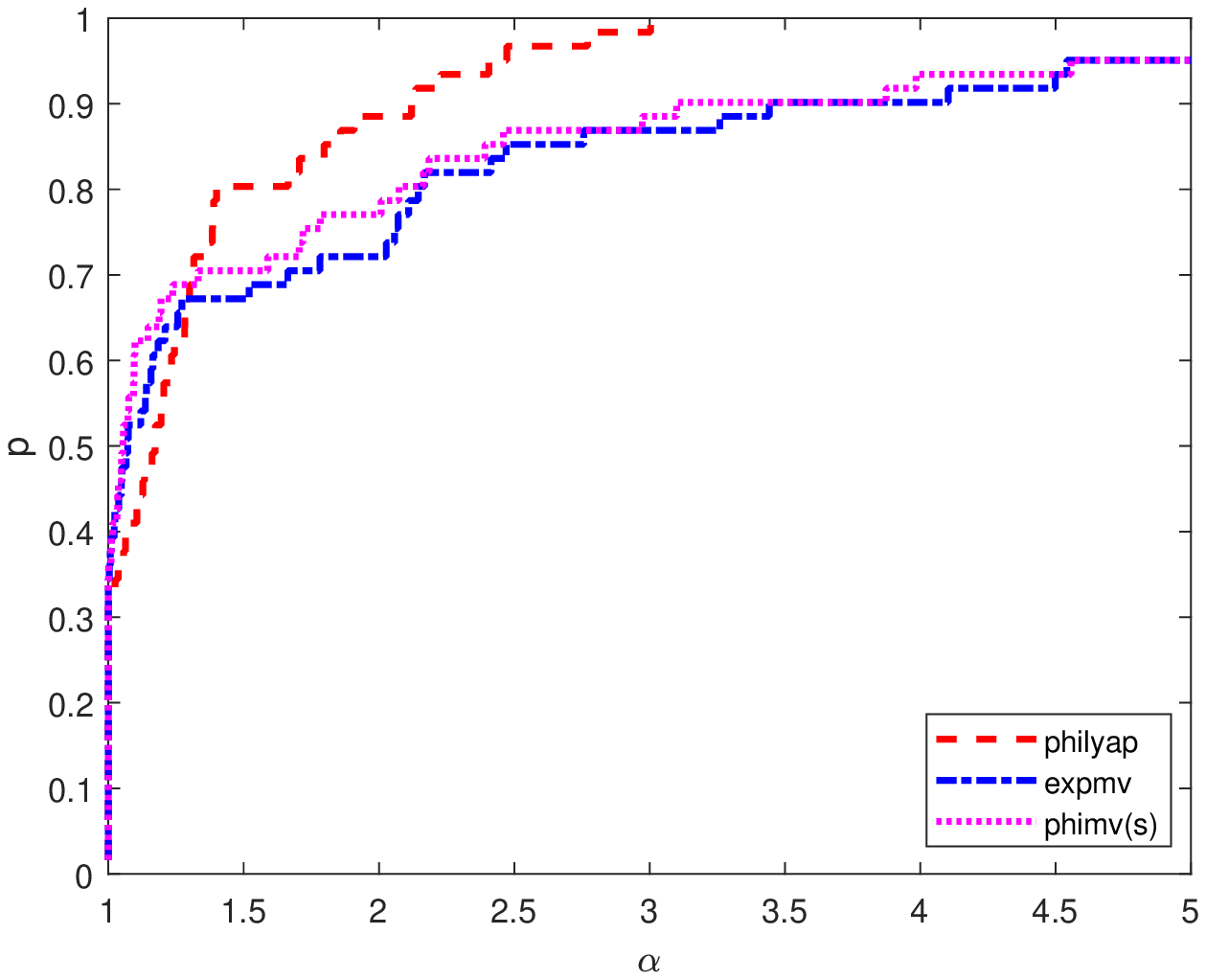}\\
\end{minipage}
\mbox{\hspace{-1.5cm}}
\begin{minipage}{0.5\linewidth}
\centering
\includegraphics[width=7cm,height=5cm]{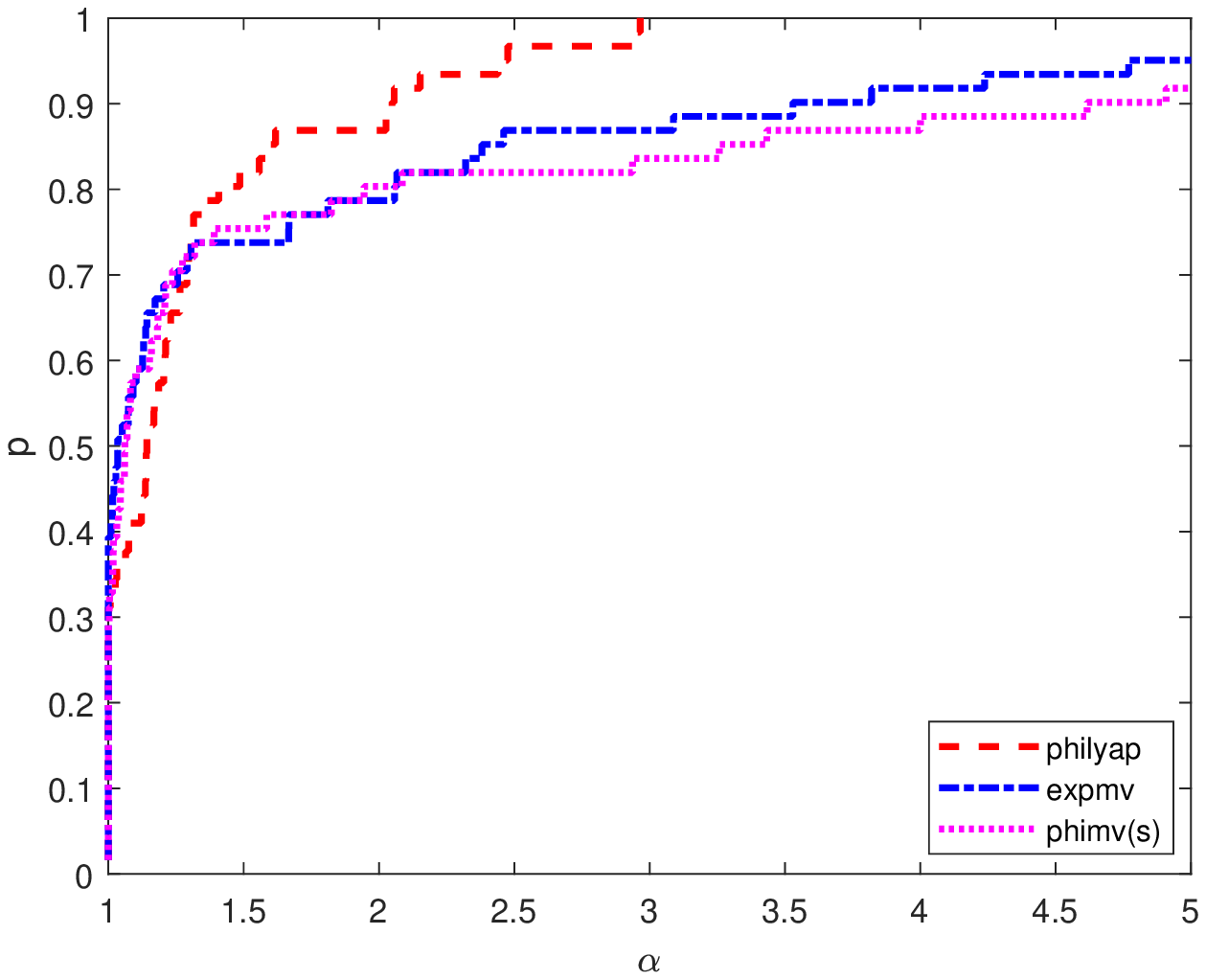}\\
\end{minipage}
\caption{ From left to right we plot the performance of execution times when computing $\varphi_l(\mathcal{L}_A)[Q]$ for $l= 1, . . . , 8$
 of Experiment \ref{exa1}.}\label{fig5.2}
\end{figure}

Fig. \ref{fig5.2} shows the performance profiles for the test set, where for a given $\alpha$ the corresponding value of $p$ on each performance
curve is the fraction that the considered method spends a time within a factor $\alpha$ of the least time over all the methods involved \cite{DM02}.
We see that for this test set \textsf{philyap} performs better than the other two MATLAB
functions. In a preliminary analysis, we tested the performance of the three methods for one hundred operators $\mathcal{L}_A$ generated by random matrices
of size $128\times 128$. We found that the execution time of \textsf{philyap} is obviously smaller than the other two methods.
Due to the computation of the reference solutions are extremely time consuming and we have not therefore reported here.
\end{example}

\begin{table}[h]
\setlength{\abovecaptionskip}{0.cm}
\setlength{\belowcaptionskip}{-0.3cm}
\caption{Percentage of times that the relative error of \textsf{philyap} is lower than \textsf{expmv} and \textsf{phimv(s)} for Experiment \ref{exa1}.}\label{tab5.1}
\begin{center}
\begin{tabular*}{\textwidth}{@{\extracolsep{\fill}}@{~}c|cccccccc}
\toprule%
{l} & $1$& $2$& $3$& $4$& $5$& $6$& $7$& $8$ \\
  \hline
  $E(\textsf{philyap})<E(\textsf{expmv})$ & $56\%$ & $57\%$ & $72\%$ & $84\%$ &$82\%$&$82\%$
  &$82\%$&$85\%$ \\
 \hline
 $E(\textsf{philyap})<E(\textsf{phimv(s)})$ & $49\%$ & $54\%$ & $49\%$ & $61\%$ &$64\%$&$52\%$
 &$64\%$&$54\%$ \\
\bottomrule
\end{tabular*}
\end{center}
\end{table}

\begin{example} \label{exa2}
In the experiment we compare \textsf{philyap} with \textsf{expmv} and \textsf{kiops} by evaluating $\varphi_l(\mathcal{L}_A)[Q]$ for $l=1,2,\ldots,8$,
and show the efficiency of our new algorithm.
Let $\mathcal{L}_A$ be generated by the tridiagonal matrix $A=2500\cdot diag(1,-2,1)\in\mathbb{R}^{400\times 400}$,
and let $Q\in \mathbb{R}^{400\times 400}$ be randomly symmetric matrix.
The Lyapunov operator $\mathcal{L}_A$ can be naturally regarded as the the standard 5-point difference discretization
of the two-dimensional Laplacian operator $\frac{\partial X}{\partial x}+\frac{\partial X}{\partial y}$ on the unit
square with $400$ nodes in each spatial dimension.
The reference solutions are obtained from running MATLAB built-in function \textsf{ode45} with absolute tolerance of $10^{-20}$ and
relative tolerance of $2.22045\cdot10^{-14}$. These have been done by vectorizing the corresponding LDEs (\ref{2.5})
into a vector-valued ODEs with $160000$ unknowns.

Table \ref{tab5.2} lists the performance of the three methods in terms of both accuracy and CPU time.
We also list the the execution time of \textsf{ode45} in the last column. We note that the
errors obtained with each one are similar but the execution time, however, is obviously smaller when \textsf{philyap} is used.
\end{example}

\begin{table}[H]
\setlength{\abovecaptionskip}{0.cm}
\setlength{\belowcaptionskip}{-0.3cm}
\caption{The CPU time (in seconds) and the relative errors when computing $\varphi_l(\mathcal{L}_A)[Q]$ for $l=1,2,\ldots,8$ of Experiment \ref{exa2}.}
 \label{tab5.2}
\begin{center}
\begin{tabular*}{\textwidth}{@{\extracolsep{\fill}}@{~~}c|lr|lr|lr|c}
\toprule%
\raisebox{-2.00ex}[0cm][0cm]{$l$}&
\multicolumn{2}{c|}{\textsf{expmv}}&\multicolumn{2}{c|}{\textsf{kiops}}&\multicolumn{2}{c|}{\textsf{philyap}}&\multicolumn{1}{c}{\textsf{ode45}}\\
\cline{2-8}
 &error &time&error&time &error&time & time\\
\midrule%
\multirow{1}{*}{1}
&9.7950e-14&131.01    &8.9892e-15&16.09   &3.8019e-14&0.18 &261.12     \\\hline
\multirow{1}{*}{2}
&2.5301e-13&130.18   &6.8154e-15&15.82   &2.3683e-14&0.24 &165.26   \\\hline
\multirow{1}{*}{3}
&1.4198e-13&130.24    &5.2936e-14&15.15   &1.7568e-14&0.32 & 2137.38   \\\hline
\multirow{1}{*}{4}
&2.1149e-13&129.73    &3.8412e-14&13.80   &1.3858e-14& 0.39 & 1809.65  \\\hline
\multirow{1}{*}{5}
&3.4152e-13&129.66   &2.8272e-14&13.01   &1.1563e-14&0.49  &1391.77  \\\hline
\multirow{1}{*}{6}
&5.9342e-15&130.56    &4.0305e-14&12.28   &1.0012e-14&0.59 &975.93   \\\hline
\multirow{1}{*}{7}
&4.3692e-13&129.77    &7.3940e-14&12.22   &8.8777e-15&0.68 &673.22   \\\hline
\multirow{1}{*}{8}
&6.2239e-14&130.69    &8.6059e-15&12.49  &8.2295e-15&0.83&450.93   \\
\bottomrule
\end{tabular*}
\end{center}
\end{table}

\begin{example} \label{exa3}
To illustrate the behavior of the matrix-valued exponential integrators implemented with the function \textsf{philyap},
we consider the differential Riccati equations :
\begin{equation}\label{5.2}
\left\{
\begin{array}{l}
X'(t)=AX(t)+X(t)A^T+CC^T-X(t)BB^TX(t)=:F(X(t)),\\
X(0)=I,
\end{array}
\right.
\end{equation}
where the matrix $A\in\mathbb{R}^{400\times 400}$ stems from the spatial finite difference discretization of the following advection-diffusion model
\begin{equation*}
\frac{\partial u}{\partial t}=\Delta u - 10x \frac{\partial u}{\partial x} - 100y\frac{\partial u}{\partial y}
\end{equation*}
on the domain $\Omega=(0,1)^2$ with homogeneous Dirichlet boundary conditions, and $B, C^T\in \mathbb{R}^{400\times 1}$ are the corresponding load vectors.
The system matrices $A,$ and $B,$ $C$ can be generated directly by MATLAB functions
\textsf{fdm\underline{~}2d\underline{~}matrix}
and \textsf{fdm\underline{~}2d\underline{~}vector},
respectively, from LYAPACK toolbox \cite{Penzl}. This is a widely used test system.
We integrate system (\ref{5.2}) with $B=\textsf{fdm\underline{~}2d\underline{~}vector}(20,'.1<x<=.3')$
and $C=\textsf{fdm\underline{~}2d\underline{~}vector}(20, '.7<x<=.9')$
using two matrix-valued exponential Rosenbrock-type integration schemes \textsf{exprb2} and \textsf{exprb3} presented in \cite{Li2021}.
The first scheme is of order two and requires the computation of the first operator $\varphi$-function. The second scheme is of order three,
and the first and the third operator $\varphi$-functions have to be evaluated at each time step.
As in Experiment \ref{exa2}, the reference solutions are obtained by \textsf{ode45} with an absolute tolerance of $10^{-20}$
and a relative tolerance of $2.22045\cdot10^{-14}$ by solving the vector-valued ODEs generated by DREs (\ref{5.2}).
To provide a comparative baseline we also include two matrix-based BDF methods \cite{Dieci}, denoted \textsf{BDF1} and \textsf{BDF2}, where the number denotes the order of the method.
In our experiments we use the MATLAB solver \textsf{care} from the control systems toolbox to solve the algebra Riccati equations (\ref{5.2})
appearing in the BDF schemes.

In Fig. \ref{fig5.3}, left we present accuracy plots for \textsf{exprb2}, \textsf{exrb3}, \textsf{BDF1} and \textsf{BDF2} for the system
over the integration interval $[0, 0.05]$ with the variable grid sizes $n=2^k$ for $k=4,5,\ldots,9.$
The vertical axis shows the relative error at the transient state $t =0.05$ and the horizontal axis gives the CPU time.
We can see that \textsf{exprb2} and \textsf{exrb3} are more accurate than BDF methods under the same time step size.
In Fig. \ref{fig5.3}, right we show the relative error against the computation time,
which demonstrates that \textsf{exprb2} and \textsf{exrb3} are more efficient
than BDF methods.

In Table \ref{tab5.3} we list the relative errors as well as the corresponding time (in seconds) obtained with
each method at the stable state $t=0.1$ with the grid size
$n=100$. It can be seen that \textsf{exprb2} and \textsf{exrb3} are more accuracy and cost less CPU time.
\end{example}

\begin{figure}[!htbp]
\begin{minipage}{0.5\linewidth}
\centering
\includegraphics[width=7cm,height=5cm]{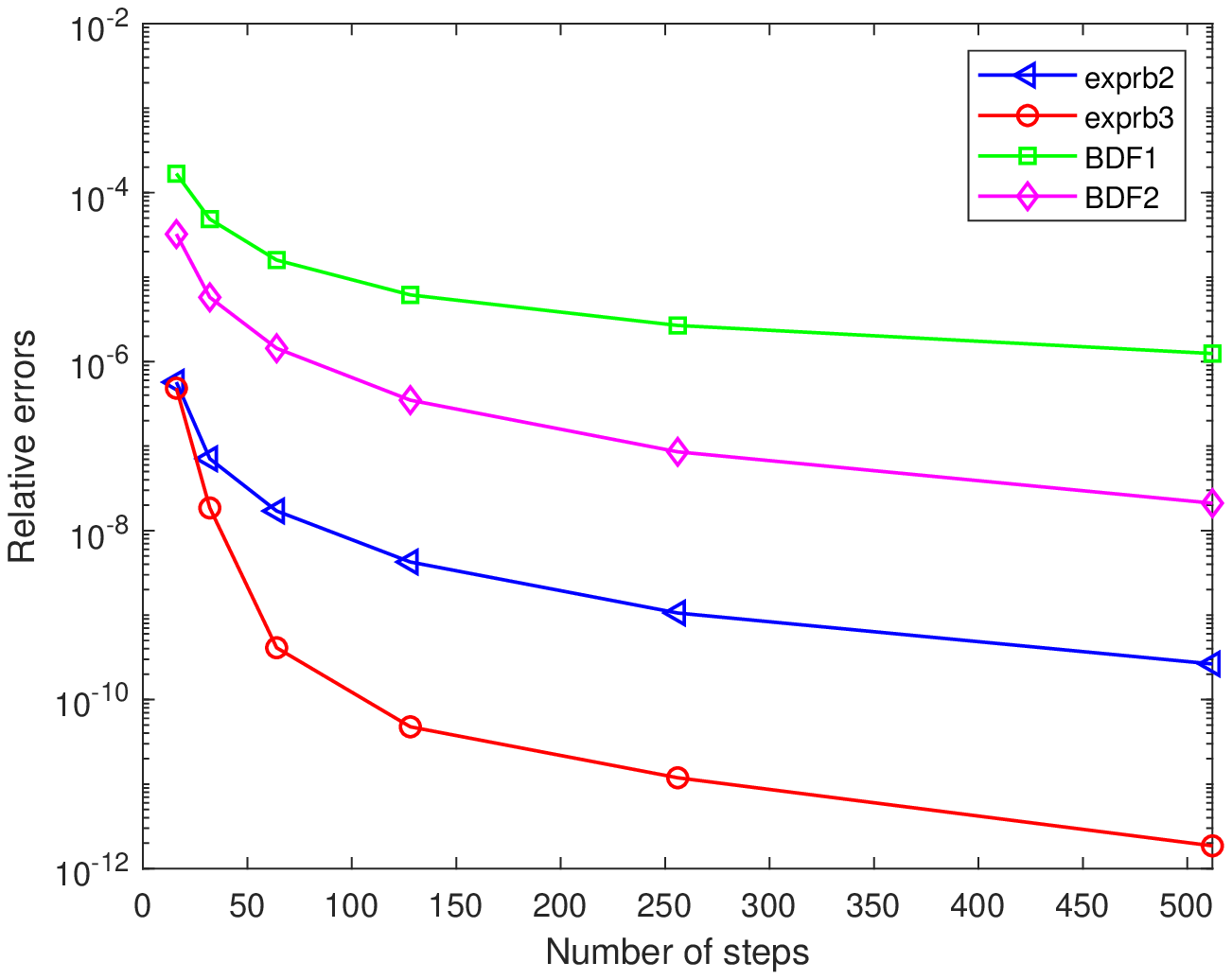}\\
\footnotesize{a. ~Accuracy plot}
\end{minipage}
\mbox{\hspace{-1.5cm}}
\begin{minipage}{0.5\linewidth}
\centering
\includegraphics[width=7cm,height=5cm]{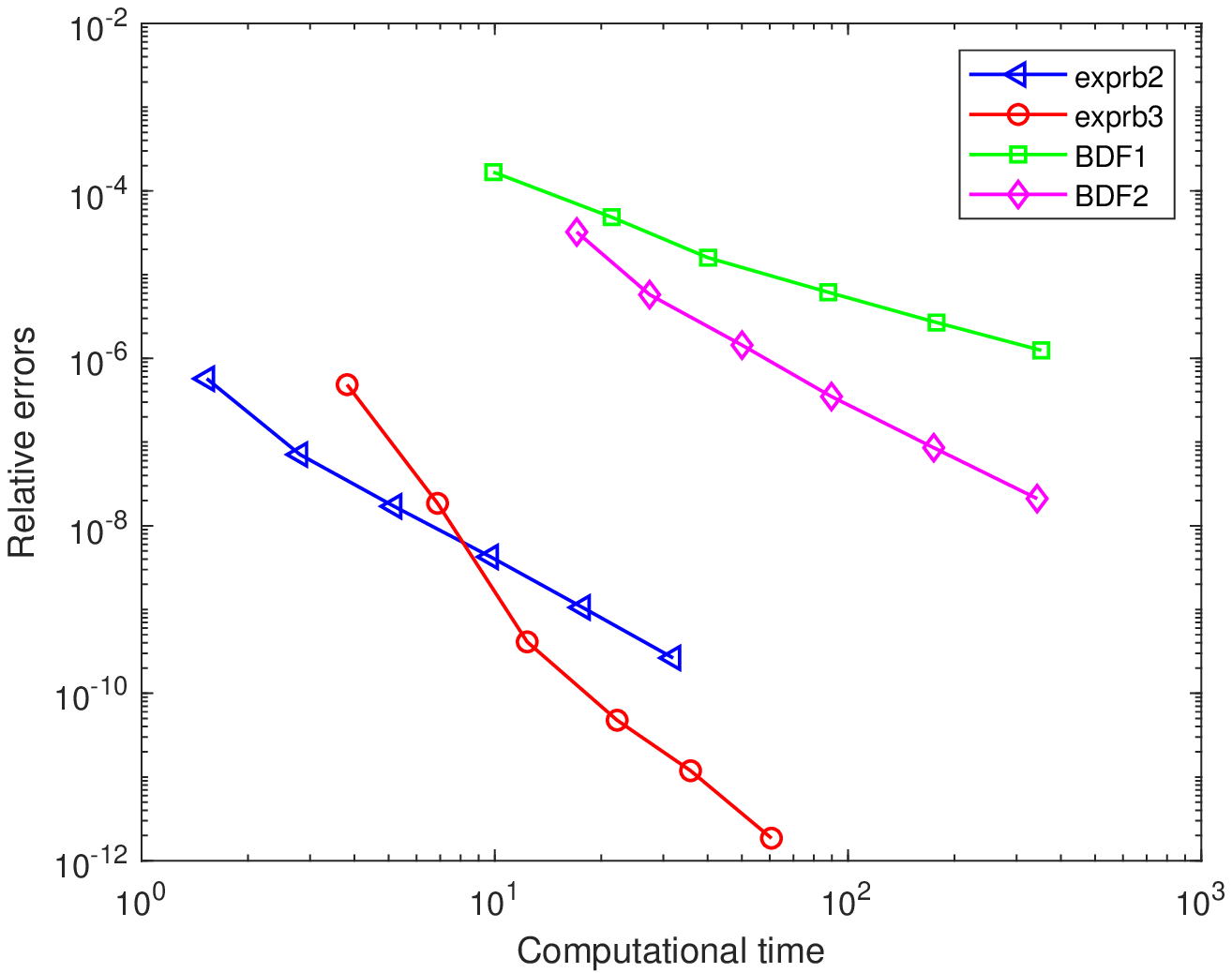}\\
\footnotesize{b.~Efficiency plot}
\end{minipage}\\
\caption{Results for the DREs for Experiment \ref{exa3}. Left: The relative errors versus the variable number of time steps
$k=2^{4},$ $(k=4,\cdots,9)$ at $t=0.05$. Right: The relative errors versus the computation time for the same problem.}\label{fig5.3}
\end{figure}

\begin{table}[H]
\setlength{\abovecaptionskip}{0.cm}
\setlength{\belowcaptionskip}{-0.3cm}
\caption{The CPU time (in seconds) and the relative errors when integrating DREs (\ref{5.2}) on [0, 0.1] of Experiment \ref{exa3}.}
 \label{tab5.3}
\begin{center}
\begin{tabular*}{\textwidth}{@{\extracolsep{\fill}}@{~~}lr|lr|lr|lr}
\toprule%
\multicolumn{2}{c|}{\textsf{expbr2}}&\multicolumn{2}{c|}{\textsf{expbr3}}&\multicolumn{2}{c|}{\textsf{BDF1}}&\multicolumn{1}{c}{\textsf{BDF2}}\\
\cline{1-8}
 error &time&error&time &error&time &error&time\\
\midrule%
4.6827e-14&8.11    &3.4002e-15&18.75   &2.5131e-10&67.82 &7.6810e-12&71.21 \\
\bottomrule
\end{tabular*}
\end{center}
\end{table}

\section{Conclusion}
The modified scaling and squaring method has been extended from the matrix $\varphi$-functions to the Lyapunov operator $\varphi$-functions.
Such operator functions constitute the building blocks of matrix-valued exponential integrators.
We have determined the key values of the order $m$ of the
Taylor approximation and the scaling parameter $s$ using a quasi-backward error analysis.
Numerical experiments illustrate that the method is efficient and reliable and can be used as a kernel for evaluating
the operator $\varphi$-functions in matrix-valued exponential integrators. We are currently
investigating the application of matrix-valued exponential integrators which use the method described in this paper
for solving the reduced LDEs and DREs by Krylov subspace methods.
In the future we also hope to develop low-rank approximations to large-scale Lyapunov operator
$\varphi$-functions and further devise efficient low-rank exponential integration schemes to solve large-scale MDEs.

\section*{Acknowledgements}
 This work was supported in part by the Jilin Scientific and Technological Development Program (Grant No. 20200201276JC)
 and the Natural Science Foundation of Jilin Province (Grant No. 20200822KJ),
 and the Natural Science Foundation of Changchun Normal University (Grant No. 002006059).

\end{document}